\numberwithin{equation}{section}
\newtheorem{prop}{Proposition}
\newtheorem{lemma}[prop]{Lemma}
\newtheorem{thm}[prop]{Theorem}
\newtheorem{cor}[prop]{Corollary}
\numberwithin{prop}{section}
\theoremstyle{definition}
\newtheorem{defn}[prop]{Definition}
\newtheorem{rmk}[prop]{Remark}
\newcommand{\del}{\partial}
\newcommand{\delb}{\bar{\partial}}\newcommand{\dt}{\frac{\partial}{\partial t}}
\newcommand{\brs}[1]{\left| #1 \right|}
\newcommand{\gd}{\delta}
\newcommand{\gl}{\lambda}
\newcommand{\gL}{\Lambda}
\newcommand{\gU}{\Upsilon}
\newcommand{\gw}{\omega}
\newcommand{\ga}{\alpha}
\newcommand{\gb}{\beta}
\newcommand{\N}{\nabla}
\newcommand{\HH}{\mathcal H}
\newcommand{\EE}{\mathcal E}
\newcommand{\LL}{\mathcal L}
\newcommand{\til}[1]{\widetilde{#1}}
\newcommand{\nm}[2]{\brs{\brs{ #1}}_{#2}}
\renewcommand{\bar}[1]{\overline{#1}}
\renewcommand{\i}{\sqrt{-1}}
\newcommand{\bl}{\bar{l}}
\newcommand{\bw}{\bar{w}}
\newcommand{\bz}{\bar{z}}
\newcommand{\IP}[1]{\left<#1\right>}
\newcommand{\bga}{\bar{\alpha}}
\DeclareMathOperator{\osc}{osc}
\DeclareMathOperator{\Id}{Id}
\DeclareMathOperator{\End}{End}
\DeclareMathOperator{\Real}{Re}
\begin{document}

\title{Evans-Krylov Estimates for a nonconvex Monge Amp\`ere equation}

\date{\today}

\begin{abstract} We establish Evans-Krylov estimates for certain nonconvex fully
nonlinear elliptic and parabolic equations by exploiting partial Legendre
transformations.  The equations under consideration arise in part from the study
of the ``pluriclosed flow'' introduced by the first author and Tian \cite{ST1}. 
\end{abstract}

\author{Jeffrey Streets}
\address{Rowland Hall\\
         University of California, Irvine\\
         Irvine, CA 92617}
\email{\href{mailto:jstreets@uci.edu}{jstreets@uci.edu}}
\thanks{J. Streets and M. Warren gratefully acknowledge support from the NSF via DMS-1301864 and DMS-1161498, respectively. }

\author{Micah Warren}
\address{Fenton Hall\\
         University of Oregon\\
         Eugene, OR 97403}
\email{\href{mailto:micahw@uoregon.edu}{micahw@uoregon.edu}}

\maketitle

\section{Introduction}
\subsection{Statement of Main Estimate}

Consider $U \subset \mathbb R^k \times \mathbb R^l$ with coordinates
$\{x_i\}_{i=1}^k$ and $\{y_i\}_{i=1}^l$, and let $u \in C^{\infty}( U)$ be
convex in the $x$ variables and concave in the $y$ variables.  We
will consider two equations in this setting.  First, we have the \emph{real
twisted Monge-Amp\`ere equation}
\begin{align} \label{realelliptic}
F(u) := \log \det u_{x_i x_j} - \log \det (- u_{y_i y_j}) = 0.
\end{align}
Next we consider the \emph{parabolic real twisted Monge-Amp\`ere equation}
\begin{align} \label{realparabolic}
H(u) := \dt u - F(u) = 0.
\end{align}
We also consider similar equations using complex variables.  In particular,
consider $U \subset \mathbb C^k \times \mathbb C^l$ with coordinates
$\{z_i\}_{i=1}^k$ and $\{w_i \}_{i=1}^l$, and let $u \in C^{\infty}(U)$ be
plurisubharmonic in the $z$ variables and plurisuperharmonic in the $w$
variables.  We
have the \emph{complex twisted Monge-Amp\`ere equation}
\begin{align} \label{complexelliptic}
F_{\mathbb C}(u) := \log \det u_{z_i \bz_j} - \log \det (- u_{w_i \bw_j}) = 0.
\end{align}
Lastly we have the \emph{parabolic complex twisted Monge-Amp\`ere equation},
\begin{align} \label{complexparabolic}
H_{\mathbb C}(u) := \dt u - F_{\mathbb C}(u) = 0.
\end{align}
With uniform convexity assumptions made (see Definition \ref{conedef}),
equations
(\ref{realelliptic}) and (\ref{complexelliptic}) are uniformly elliptic,
whilst
(\ref{realparabolic}) and (\ref{complexparabolic}) are uniformly parabolic. 
However,
these equations are \emph{neither convex nor concave}, and as such the
Evans-Krylov theory does not apply to these equations. 

The celebrated result of Evans and Krylov states that for uniformly elliptic (or
parabolic) equations, one can conclude interior H\"{o}lder estimates on the
second derivatives from $C^{1,1}$ estimates, provided the equation is convex or
concave.  There are a few more general cases where such a statement can be made:
Caffarelli and Cabre \cite{CC2} showed results in the case where the functional
$F$ is a minimum of convex and concave functions.   Caffarelli and Yuan
\cite{CY} demonstrate the estimates under a partial convexity condition. Yuan
\cite{Y01} has proved such estimates in the specific case of the $3$-dimensional
special Lagrangian equations.  In full generality these estimates are known to
fail for nonconvex equations : Nadirashvilli and Vl\u{a}du\c {t} \cite{NV}
exhibit a 2-homogeneous function satisfying a uniformly elliptic equation, which
is $C^{1,1}$ and not  $C^2$ at the origin.  

The main purpose of this paper is to establish a $C^{2,\alpha}$ estimate for solutions of twisted Monge Amp\`ere equations from uniform bounds on the Hessian.

\begin{thm} \label{EvansKrylov} Given $k,l \in \mathbb N$ and $\gl,\gL> 0$ there
exists $C = C(k,l,\gl,\gL)$ and $\ga = \ga(k,l,\gl,\gL) > 0$ such that
\begin{itemize}
\item If $u \in \EE^{k,l,\mathbb R }_{B_2,\gl,\gL}$ is a solution to
(\ref{realelliptic})
then $\nm{u}{C^{2,\ga}(B_1)} \leq C$.
\item If $u \in \EE^{k,l,\mathbb R}_{Q_2,\gl,\gL}$ is a solution to
(\ref{realparabolic})
then $\nm{\dt u}{C^{\ga}(Q_1)} + \nm{u}{C^{2,\ga}(Q_1)} \leq C$.
\item If $u \in \EE^{k,l,\mathbb C}_{B_2,\gl,\gL}$ is a solution to
(\ref{complexelliptic}) then $\nm{u}{C^{2,\ga}(B_1)} \leq C$.
\item If $u \in \EE^{k,l,\mathbb C}_{Q_2,\gl,\gL}$ is a solution to
(\ref{complexparabolic}) then $\nm{\dt u}{C^{\ga}(Q_1)} + \nm{u}{C^{2,\ga}(Q_1)}
\leq C$.
\end{itemize}
\end{thm}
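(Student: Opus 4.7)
The plan is to exploit the partial Legendre transformation foreshadowed in the abstract: by dualizing in the concave $y$ variables, the twisted Monge--Amp\`ere equation converts into the classical, uniformly convex Monge--Amp\`ere equation $\det D^2 v = 1$, to which Caffarelli's interior $C^{2,\alpha}$ theory applies. The estimates are then transported back to $u$ via the (non-degenerate) Legendre map.

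For \eqref{realelliptic} I would set $\eta := -u_y$ and define $v(x,\eta) := \eta\cdot y(x,\eta) + u(x, y(x,\eta))$, where $y(x,\eta)$ inverts $y\mapsto -u_y$; this inversion is well posed because the cone conditions in $\EE^{k,l,\mathbb R}_{B_2,\gl,\gL}$ give $\gl I\le -u_{yy}\le \gL I$. A direct calculation yields
\[
D^2 v \;=\; \begin{pmatrix} u_{xx} - u_{xy}u_{yy}^{-1}u_{yx} & -u_{xy}u_{yy}^{-1} \\ -u_{yy}^{-1}u_{yx} & -u_{yy}^{-1}\end{pmatrix},
\]
and the Schur complement identity collapses the block determinant to $\det D^2 v = \det u_{xx}/\det(-u_{yy})$. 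Thus \eqref{realelliptic} is precisely $\det D^2 v = 1$. Because the $\eta\eta$-block $(-u_{yy})^{-1}$ and its Schur complement $u_{xx}$ are both uniformly positive definite, $v$ is uniformly strictly convex with $D^2 v$ pinched by constants depending only on $\gl,\gL$. Caffarelli's interior estimate then yields $\|v\|_{C^{2,\alpha}}\le C$, and pulling back through the Legendre map---a bi-Lipschitz diffeomorphism with non-degenerate Jacobian under the cone bounds---produces the stated bound on $u$.

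For \eqref{realparabolic} I would apply the same transform at each time. The chain rule combined with $\eta = -u_y$ makes the $y_t$ contributions cancel, giving $v_t = u_t$, so \eqref{realparabolic} becomes the standard parabolic Monge--Amp\`ere equation $v_t = \log\det D^2 v$, and the parabolic Evans--Krylov/Caffarelli estimate applies directly. The complex equations \eqref{complexelliptic}--\eqref{complexparabolic} should be handled by the analogous Legendre transform performed in the real coordinates underlying $w\in\mathbb{C}^l$; plurisuperharmonicity of $u$ in $w$ together with the cone bounds supplies positivity of the relevant real Hessian block, and the same block-determinant computation reduces matters to a standard Monge--Amp\`ere equation for $v$, covered by either real Evans--Krylov or complex Monge--Amp\`ere regularity (Caffarelli--Kohn--Nirenberg--Spruck, B\l ocki).

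The main obstacle I anticipate is the complex case: one must verify that the real Legendre-transformed $v$ either inherits a usable complex structure in the dual variable (so that complex Monge--Amp\`ere regularity applies to $\det v_{\zeta\bar\zeta}$ in the natural way), or at least solves a real Monge--Amp\`ere equation whose $C^{2,\alpha}$ bound can be converted back into the mixed complex second derivatives needed for $u$. A secondary concern is ensuring every constant depends solely on $k,l,\gl,\gL$, which should follow because the cone conditions uniformly control both $D^2 v$ and the regularity of the Legendre map.
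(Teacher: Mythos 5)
Your strategy for the two real equations is essentially the one the authors themselves point to (Proposition \ref{translaw} and Corollary \ref{rigidity}): the partial Legendre transform in the concave $y$-variables produces a convex function $v$ with $\det D^2 v = 1$ (resp.\ $v_t = \log\det D^2 v$), and Evans--Krylov for the concave operator $\log\det$ then yields interior $C^{2,\alpha}$ bounds that can be pulled back through the bi-Lipschitz change of variables. Two technical points you should still nail down there: (i) the image domain $\hat\Omega$ (time-dependent in the parabolic case) is not a standard ball/cylinder, so you must check that the image of $B_1$ sits a uniform distance inside the image of $B_2$; and (ii) the claimed pinching $\lambda' I \le D^2 v \le \Lambda' I$ requires control of the mixed block $u_{xy}$, which the cone condition in Definition \ref{conedef} does not directly provide --- positivity of the $\eta\eta$-block and of its Schur complement gives $D^2v\ge 0$ but not a two-sided bound unless the off-diagonal block $u_{xy}u_{yy}^{-1}$ is bounded.

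The genuine gap is the complex case, which is the main content of the theorem. Your proposed fix --- perform the real Legendre transform in the real coordinates underlying $w\in\mathbb C^l$ --- does not get off the ground: plurisuperharmonicity bounds only the Hermitian form $u_{w_i\bar w_j}$, while the real Hessian of $u$ in $(\Real w,\im w)$ also involves the uncontrolled pure derivatives $u_{w_iw_j}$, so $u$ need not be concave in those real variables and the map $\eta=-u_{(\Real w,\im w)}$ need not be invertible. Even formally, the block-determinant identity you invoke converts \emph{real} Hessian determinants, whereas $F_{\mathbb C}$ is built from \emph{complex} Hessian determinants, so the transformed function would not satisfy a standard (real or complex) Monge--Amp\`ere equation. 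This is exactly the obstruction the paper emphasizes: no genuine complex Legendre transform exists here. The paper's resolution is to abandon the change of variables altogether and work with the matrix $W$ of (\ref{cW}), which is always defined (only $u_{w\bar w}$ is inverted), rewrite the equation as $\dt u = \log\det W$, prove by direct computation that $\left(\dt - \LL\right)W\le 0$ (Proposition \ref{complexsubsoln}), and then run a weak-Harnack oscillation-decay argument (Theorem \ref{genEK}) in which the subsolution property of $W$ substitutes for the convexity hypothesis of classical Evans--Krylov; the H\"older bounds on the blocks of $W$ are then unwound to give H\"older bounds on $u_{z\bar z}$, $u_{z\bar w}$, $u_{w\bar w}$. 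Without an argument of this type (or a genuinely new complex Legendre transform), your proposal does not prove the third and fourth bullets.
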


The key insight to establish these estimates comes from applying a partial
Legendre
transformation in the real case.   More specifically, as the functions in
question are mixed
concave/convex, we can apply a Legendre transformation to the concave variables
to obtain a strictly convex function.  This transformation was described by
Darboux \cite{Darboux}, who observed how it has the effect of linearizing the
two-dimensional real Monge-Amp\`ere equation.  This was exploited to establish
regularity properties for the $2$-dimensional Monge Amp\`ere equation by many
authors, and later in general dimension \cite{RSW}.  For the classical
theory, see 
\cite{HW,H1,H2,H3,Schulz2,Schulz} and for more recent applications see
\cite{GuanPhong}.  In Proposition \ref{translaw} below
we show that more generally one can transform solutions to (\ref{realelliptic})
 into the real elliptic 
Monge-Amp\`ere equation.  This observation alone suffices to establish an
analogue
of the rigidity result of Calabi-Jorgens-Pogorelov (\cite{Calabi, Jorgens,
Pogorelov}) for uniformly concave solutions of (\ref{realelliptic})

This result can be used in conjunction with a blowup argument to establish the
first two claims of Theorem \ref{EvansKrylov}.  However, even with the natural
hypotheses
for equations (\ref{complexelliptic}) and (\ref{complexparabolic}) of
plurisub/superharmonicity, there is no well-defined version of a complex 
Legendre transformation. Thus this method alone cannot establish
Theorem \ref{EvansKrylov} in these cases.  Nonetheless, we use the
transformation law to take quantities which are subsolutions to parabolic
equations associated to the complex parabolic Monge-Amp\`ere equation, and
express
them in terms of the inverse Legendre-transformed coordinates, assuming this
transformation were defined.  As it turns out, these quantities in the original
coordinates are \emph{always} defined (irrespective of whether the Legendre
transformation is defined), and still are subsolutions of certain parabolic
equations.  This key observation can be exploited to adapt the usual proof of
the Evans-Krylov 
estimate for convex equations to our setting, establishing Theorem
\ref{EvansKrylov}.

\subsection{Consequences for pluriclosed flow}

In \cite{SPCFSTB,ST1,ST2,STGK} the second author and Tian introduced and
developed a
geometric flow of pluriclosed metrics on complex manifolds.  Briefly, given
$(M^{2n}, g_0, J)$ a Hermitian manifold such that the associated K\"ahler form
$\gw_0$ is pluriclosed, i.e.
$\del\delb \gw_0 = 0$, we say that a one-parameter family $g_t$ of metrics 
metrics is a solution to \emph{pluriclosed flow} if the associated K\"ahler
forms satisfy
\begin{align} \label{PCF}
\dt \gw =&\ \del \del^*_{\gw} \gw + \delb \delb^*_{\gw} \gw + \frac{\i}{2} \del
\delb \log \det g.
\end{align}
This evolution equation is strictly parabolic and preserves the pluriclosed
condition.  Moreover, in \cite{STGK} it was discovered that pluriclosed flow
preserves generalized K\"ahler geometry in an appropriate sense.  Recall that a
generalized K\"ahler manifold is a quadruple $(M^{2n}, g, J_{A}, J_B)$
consisting
of a smooth manifold with two integrable complex structures $J_{A}, J_B$, a
metric
$g$ which is compatible with both, and moreover satisfies the conditions
\begin{align*}
d^c_A \gw_A = - d^c_B \gw_B, \qquad d d^c_A \gw_A = 0.
\end{align*}
If we impose the further condition that $[J_A,J_B] = 0$, one obtains an
integrable splitting of $T_{\mathbb C} M$, and moreover the pluriclosed flow in
this setting reduces (up to background terms) to the parabolic complex twisted
Monge Amp\`ere equation (\cite{SPCFSTB} Theorem 1.1).  This is expounded upon in
\S \ref{PCFsec}.  Combining Theorem \ref{EvansKrylov} with a blowup argument
yields higher order regularity of the flow in
the presence of uniform metric estimates.  This result plays a key role in
establishing new long time existence and convergence results for the pluriclosed
flow, and these will be described in a future work.

\begin{thm} \label{PCFEK} Let $(M^{2n}, g_0, J_A, J_B)$ be a compact generalized
K\"ahler manifold satisfying $[J_A,J_B] = 0$.  Let $g_t$ be the solution to
pluriclosed flow with initial condition $g_0$.  Suppose the solution exists on
$[0,\tau)$, $\tau < \tau^*(g)$ (see Definition \ref{taustardef}), and there
exists constants $\gl, \gL$ such that
\begin{align*}
 \gl g_0 \leq g_t \leq \gL g_0.
\end{align*}
Given $k \in \mathbb N$, $\ga \in [0,1)$ there exists a constant $C =
C(k,\ga,g_0, \gl,\gL,\tau)$ such that
\begin{align*}
\nm{g_t}{C^{k,\ga}} \leq C. 
\end{align*}
\end{thm}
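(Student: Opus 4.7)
The plan is to reduce pluriclosed flow in the commuting generalized K\"ahler setting to the parabolic complex twisted Monge-Amp\`ere equation in local coordinates, apply Theorem \ref{EvansKrylov} to obtain $C^{2,\alpha}$ estimates, and then bootstrap via parabolic Schauder theory.

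First, because $[J_A, J_B] = 0$, the complexified tangent bundle admits an integrable splitting into the $\pm 1$-eigenspaces of $J_A J_B$, giving local holomorphic coordinates $(z_i, w_j) \in \mathbb{C}^k \times \mathbb{C}^l$ in which both $\omega_A$ and $\omega_B$ decompose in a block-diagonal fashion. Writing $\omega_t = \omega_0 + \i \del \delb u$ on such a chart (modulo a background correction), the reduction cited from \cite{SPCFSTB} turns pluriclosed flow into the equation
\begin{align*}
\dt u = \log \det u_{z_i \bz_j} - \log \det(- u_{w_i \bw_j}) + f,
\end{align*}
where $f$ is a smooth function depending only on the fixed background geometry $(g_0, J_A, J_B)$. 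The hypothesis $\gl g_0 \leq g_t \leq \gL g_0$ translates into uniform two-sided bounds on $u_{z_i \bz_j}$ and on $-u_{w_i \bw_j}$, so on any parabolic cylinder $Q_r$ in these coordinates the potential $u$ lies in the admissible cone $\EE^{k,l,\mathbb{C}}_{Q_r,\gl',\gL'}$ for constants depending on $\gl, \gL$, and the background geometry.

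Second, I would apply the fourth assertion of Theorem \ref{EvansKrylov} to obtain uniform $C^{2,\ga}$ estimates on $u$. The smooth inhomogeneity $f$ is handled by a standard blowup-and-contradiction argument: if $C^{2,\ga}$ estimates failed uniformly along some sequence of solutions, a parabolic rescaling chosen to normalize the blowing-up seminorm would send the background term to zero in the limit, producing a global solution on $\mathbb{R}^{2n} \times (-\infty, 0]$ of the unperturbed equation (\ref{complexparabolic}) with uniform ellipticity constants and nontrivial $C^{2,\ga}$ oscillation at the origin, contradicting the a priori estimate provided by Theorem \ref{EvansKrylov} applied on balls of arbitrarily large radius.

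Third, with $u \in C^{2,\ga}$ in hand, the linearization of the twisted Monge-Amp\`ere equation is a uniformly parabolic linear operator with $C^{\ga}$ coefficients. Differentiating in space and time and iterating parabolic Schauder estimates upgrades $u$ to $C^{k,\ga}$ for every $k$, with constants depending on $k, \ga, g_0, \gl, \gL, \tau$. Covering $M$ by finitely many such adapted charts---available because the $J_A J_B$ splitting is preserved by the flow on $[0, \tau^*(g))$, which is precisely the role of the hypothesis $\tau < \tau^*(g)$---and translating the estimates on the local potentials back to tensor estimates on $g_t$ yields the conclusion. The main obstacle is the blowup step: one must verify that the admissible cone rescales correctly, that $f$ genuinely drops out in the limit, and that the limiting ancient solution enjoys the rigidity forced by Theorem \ref{EvansKrylov}; a secondary subtle point is globalization, ensuring that local potentials with controlled oscillation can be produced on each chart so that the passage from local potential estimates to a global bound on $g_t$ is quantitative in $g_0$.
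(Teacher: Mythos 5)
Your proposal follows essentially the same route as the paper: reduce to the scalar twisted Monge--Amp\`ere flow using $\tau<\tau^*(g)$, run a blowup/contradiction argument in which the background terms scale away so that the rescaled limit solves the flat equation (\ref{complexparabolic}) in the admissible cone and is controlled by Theorem \ref{EvansKrylov}, and then bootstrap with parabolic Schauder estimates. The only (minor, harmless) difference is the quantity normalized in the blowup: the paper rescales at the maximum of the pointwise third-order quantity $\brs{\Upsilon(g,h)}^2$ built from the difference of Chern connections, rather than a $C^{2,\alpha}$ seminorm, which makes the point selection and the compactness of the rescaled sequence slightly cleaner.
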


\subsection{Outline}

In \S \ref{legbck} we recall the
partial Legendre transformation and determine transformation laws for the PDEs
in question.  Inspired by these transformation laws, in \S \ref{evolsec} we
state monotonicity formulas for certain combinations of second derivatives along
solutions to (\ref{realparabolic}), (\ref{complexparabolic}).  As the proofs
consist of lengthy, tedious calculations we relegate them to an appendix, \S
\ref{calcsec}.  Using this key input we establish Theorem \ref{EvansKrylov} in
\S \ref{eksec}.  In \S \ref{PCFsec}  we recall how to reduce solutions to the
pluriclosed flow on commuting generalized K\"ahler manifolds to solutions of a
scalar PDE
which reduces to (\ref{complexparabolic}) on flat space, and then use Theorem
\ref{EvansKrylov} to establish Theorem \ref{PCFEK}.

\section{Background on Legendre Transformation} \label{legbck}
\subsection{Real Legendre Transformation} \label{realleg}

We briefly recall the Legendre transformation and one of its key properties for
us.  For a smooth convex function $u : \mathbb R^n \to \mathbb R$, the
\emph{Legendre transformation} is defined first by the change of variables
\begin{align*}
y_i(x) =&\ \frac{\del u}{\del x_i}(x),
\end{align*}
then by declaring
\begin{align*}
w(y) =&\ x_i \frac{\del u}{\del x_i}(x) - u(x).
\end{align*}
Observe that the Jacobian of the coordinate change takes the form
\begin{align*}
\frac{\del y_i}{\del x_j} =&\ \frac{\del^2 u(x)}{\del x_i x_j}.
\end{align*}
On the other hand, the Legendre transformation is involutive, so it follows that
\begin{align*}
\frac{\del x_i}{\del y_j} =&\ \frac{\del^2 w(y)}{\del y_i \del y_j}.
\end{align*}
Thus we see that the Legendre transformation ``inverts the Monge Amp\`ere
operator" in the sense that
\begin{align*}
\det \frac{\del^2 w}{\del y_i \del y_j} = \left( \det \frac{\del^2
u}{\del x_i \del x_j} \right)^{-1}.
\end{align*}
This basic fact lies at the heart of our constructions below.

\subsection{Transformation Laws for PDE and Rigidity Results}

In this subsection we build on the observation in \S \ref{realleg} of the
transformation law for the Monge Amp\`ere operator under Legendre transformation
to define a partial Legendre transformation which can convert the twisted Monge
Amp\`ere equation into the usual Monge Amp\`ere equation.  To begin we define a
class of functions with certain convexity hypotheses.

\begin{defn} \label{conedef} Given $U \subset \mathbb R^k \times \mathbb R^l$
and constants $\gl,\gL > 0$ we
set
\begin{align*}
\EE^{k,l,\mathbb R}_{U} :=&\ \{ u \in C^{\infty}(U)\ | \ \left. D^2 u
\right|_{\mathbb
R^k}
> 0 ,
\quad \left. D^2 u \right|_{\mathbb R^l} < 0 \},\\
\EE^{k,l,\mathbb R}_{U,\gl,\gL} :=&\ \{ u \in \EE^{k,l,\mathbb R}_{U}\ |\ \gl
I_k \leq \left. D^2 u
\right|_{\mathbb R^k} \leq \gL I_k, \quad \gl I_l \leq (- \left. D^2 u
\right|_{\mathbb R^l}) \leq \gL I_l \}.
\end{align*}
Similarly, given $U \subset \mathbb C^k \times \mathbb C^l$
\begin{align*}
\EE^{k,l,\mathbb C}_{U} :=&\ \{ u \in C^{\infty}(U)\ | \ \left. \i \del\delb u
\right|_{\mathbb
C^k} > 0 , \quad \left. \i \del\delb u \right|_{\mathbb C^l} < 0 \},\\
\EE^{k,l,\mathbb C}_{U,\gl,\gL} :=&\ \{ u \in \EE^{k,l, \mathbb C}_{U}\ |\ \gl
I_k \leq \left.
\i\del\delb u
\right|_{ \mathbb C^k} \leq \gL I_k, \quad \gl I_l \leq (- \left. \i \del\delb u
\right|_{\mathbb C^l}) \leq \gL I_l \}.
\end{align*}
\end{defn}

\begin{defn} \label{PLdef} Given $u \in \EE_{\Omega}^{k,l,\mathbb R}$, the
\emph{partial
Legendre
transformation} is the map
\begin{align*}
\mathcal{PL}_{k,l} : \EE^{k,l,\mathbb R}_{\Omega} \to \EE^{n,0,\mathbb
R}_{\hat\Omega},
\end{align*}
defined as follows.  Let $(x,y)$ denote 
coordinates on the domain $\Omega \subset \mathbb R^k \times \mathbb R^l$ where
the function is originally defined.  Let $(x,z)$ denote coordinates of the set
$\hat{\Omega}$ that is the image of $\Omega$ under the map 
\begin{align}
(x,y) \longmapsto \left(x , \frac{\del u}{\del y}(x,y)\right).  
\end{align}
Now on $\hat{\Omega},$ define

\begin{align} \label{pltrans}
y(x,z) = \left\{ y\ |\ \frac{\del u}{\del y}(x,y) = z \right\}.
\end{align}
Then we have $\mathcal{PL}_{k,l}(u) = w(x,z)$, where
\begin{align*}
w(x,z) = u(x,y(x,z)) - \IP{y(x,z),z}.
\end{align*}
\end{defn}

\begin{prop} \label{translaw} (Transformation law for twisted Monge Amp\`ere)
Given $u \in
\EE^{k,l,\mathbb R}$, one has
\begin{align} \label{Wcalc}
 D^2 \mathcal{PL}_{k,l} u =&\ \left(
\begin{matrix}
 u_{xx} - u_{xy} u_{yy}^{-1}  u_{yx} & u_{yx} u_{yy}^{-1}\\
u_{yy}^{-1} u_{yx} & - u_{yy}^{-1}
\end{matrix}\right) \geq 0.
\end{align}
Moreover,
\begin{align} \label{dettransform}
\det D^2 \mathcal{PL}_{k,l} u = \frac{ \left. \det D^2 u \right|_{\mathbb
R^k}}{\det \left( \left. - D^2 u \right|_{\mathbb R^l} \right)}.
\end{align}
\begin{proof} Let $\Phi(x,y) = (x,z)$ denote the coordinate transformation
defined by (\ref{pltrans}).  The defining equation (\ref{pltrans}) can be
rewritten as $y = D_y u(x,z)$.  Differentiating this yields
\begin{align*}
I =&\ \frac{\del y}{\del y} = D_y D_y u(x,z) \frac{\del z}{\del y},
\end{align*}
thus $\frac{\del z}{\del y} = \left(D_y D_y u\right)^{-1}$.  Also
\begin{align*}
 0 =&\ \frac{\del y}{\del x} = D_x D_y u + D_y D_y u \frac{\del z}{\del x},
\end{align*}
thus $\frac{\del z}{\del x} = - (D_y D_y u)^{-1} D_x D_y u$.  Thus
\begin{align} \label{Tdef}
 D \Phi =&\ \left( 
 \begin{matrix}
  I_k & 0\\
  - (D_y D_y u)^{-1} D_x D_y u & (D_y D_y u)^{-1}
 \end{matrix} \right).
\end{align}
Using this, a further direct calculation shows
\begin{align*}
 \N w =&\ \left( u_x, - z \right).
\end{align*}
Thus
\begin{align} \label{twistedhessian}
 D^2 w =&\ \left(
 \begin{matrix}
  u_{xx} + u_{yx} \cdot \frac{\del z}{\del x} & u_{xy} \frac{\del z}{\del y}\\
  - \frac{\del z}{\del x} & - \frac{\del z}{\del y}
 \end{matrix}
\right) = \left( \begin{matrix}
 u_{xx} - u_{xy} u_{yy}^{-1}  u_{yx} & u_{yx} u_{yy}^{-1}\\
u_{yy}^{-1} u_{yx} & - u_{yy}^{-1}
\end{matrix}\right),
\end{align}
establishing (\ref{Wcalc}).  The formula (\ref{dettransform}) follows from the
block determinant formula
\begin{align*}
\det \left(
\begin{matrix}
A & B\\
C & D
\end{matrix} \right) = \det D \det \left(A - B D^{-1} C \right).
\end{align*}

To conclude the matrix is nonnegative, write it symbolically  as
\[%
\begin{pmatrix}
A+BCB^{T} & -BC\\
-C^{T}B^{T} & C
\end{pmatrix}
.
\]
Then one can easily check that
\begin{align*}
&
\begin{pmatrix}
A+BCB^{T} & -BC\\
-C^{T}B^{T} & C
\end{pmatrix}%
\begin{pmatrix}
\vec{v}\\
\vec{w}%
\end{pmatrix}
\cdot%
\begin{pmatrix}
\vec{v}\\
\vec{w}%
\end{pmatrix}
\\
& =A\vec{v}\cdot\vec{v}+C\left(  \vec{w}-B^{T}\vec{v}\right)  \cdot\left(
\vec{w}-B^{T}\vec{v}\right)  \geq 0.  
\end{align*}

\end{proof}
\end{prop}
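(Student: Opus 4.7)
The plan is to unfold the definition of $\mathcal{PL}_{k,l}$ and differentiate twice, keeping careful track of the coordinate change. Let $\Phi(x,y) = (x, D_y u(x,y)) = (x,z)$; the transformation is characterized by $y = y(x,z)$ with $z = u_y(x,y(x,z))$.

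First, I would compute the Jacobian of $\Phi$ (equivalently $\Phi^{-1}$) by implicit differentiation of the defining relation $z = u_y(x,y)$. Differentiating in $y$ (with $x$ fixed) gives $\partial z/\partial y = u_{yy}$, so $\partial y/\partial z = u_{yy}^{-1}$. Differentiating in $x$ (with $z$ fixed, so $y = y(x,z)$) gives $0 = u_{yx} + u_{yy}\,\partial y/\partial x$, so $\partial y/\partial x = -u_{yy}^{-1}u_{yx}$. This gives the full block Jacobian of the coordinate change.

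Second, I would compute $\nabla w$ directly from $w(x,z) = u(x,y(x,z)) - \langle y(x,z), z\rangle$. The point is that cross terms coming from the chain rule cancel \emph{exactly} because of the defining relation $z = u_y$: the computation yields $w_x = u_x$ and $w_z = -y$. Differentiating these relations once more, using the Jacobian formulas from the first step, produces the four blocks of $D^2 w$. For instance, $w_{zz} = -\partial y/\partial z = -u_{yy}^{-1}$ and $w_{xx} = u_{xx} + u_{xy}(\partial y/\partial x) = u_{xx} - u_{xy}u_{yy}^{-1}u_{yx}$, giving (\ref{Wcalc}).

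Third, for the determinant formula (\ref{dettransform}) I would apply the block determinant identity $\det\begin{pmatrix}A&B\\C&D\end{pmatrix} = \det D\cdot\det(A - BD^{-1}C)$ with $D = -u_{yy}^{-1}$; the Schur complement simplifies (after the cancellation from the cross terms $BD^{-1}C = -u_{yx}u_{yy}^{-1}u_{yx}\cdot u_{yy}\cdot u_{yy}^{-1}u_{yx} \cdot$ style manipulation) to $u_{xx}$, and then $\det(-u_{yy}^{-1}) = (-1)^l/\det u_{yy}$ yields the ratio in (\ref{dettransform}). Finally, for nonnegativity, I would write the quadratic form of $D^2 w$ on a test vector $(\vec v, \vec w)$ and complete the square so that it reads
\[
u_{xx}\vec v\cdot\vec v + (-u_{yy})\bigl(\vec w - (\text{linear in }\vec v)\bigr)\cdot\bigl(\vec w - (\text{linear in }\vec v)\bigr),
\]
both terms of which are nonnegative by the defining convexity/concavity conditions in $\EE^{k,l,\mathbb R}$.

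The main obstacle is purely bookkeeping: matching conventions for $u_{xy}$ versus $u_{yx}$ (which are transposes of each other) so that every block product is dimensionally consistent and the resulting matrix is manifestly symmetric. Once this is fixed, each step is a short direct calculation, and the Schur-complement identity does the work for both the determinant formula and the completion-of-squares argument for nonnegativity.
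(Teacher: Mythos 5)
Your proposal is correct and follows essentially the same route as the paper's proof: implicit differentiation of $z = u_y(x,y)$ to get the Jacobian blocks $\partial y/\partial z = u_{yy}^{-1}$ and $\partial y/\partial x = -u_{yy}^{-1}u_{yx}$, the exact cancellation giving $\nabla w = (u_x,-y)$, the block/Schur determinant identity for (\ref{dettransform}), and completion of squares for nonnegativity. No gaps.
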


\begin{cor} \label{rigidity} Given $\gl,\gL > 0$, let $u \in \EE^{k,l}_{\mathbb
R^n,\gl,\gL}$ be a solution
to (\ref{realelliptic}).  Then $u$ is a quadratic polynomial.
\begin{proof} Given a lower bound on the concavity in the second variable, we
conclude the coordinate $z$ is global, for each $x$. Directly using
(\ref{Wcalc}) we see that 
$\mathcal{PL}_{k,l}$ is an entire convex function.   Also, using
(\ref{dettransform}) and (\ref{realelliptic}) we conclude that
\begin{align*}
 \det D^2 \mathcal{PL}_{k,l} u = 1.
\end{align*}
It follows from (\cite{Calabi, Jorgens, Pogorelov}) that $\mathcal {PL}_{k,l} u$
is a quadratic polynomial, and its Hessian is constant.  Comparing against
(\ref{Wcalc}), we conclude that $u_{yy}^{-1}$ is constant, thus $u_{xy}$ is
constant, and then finally $u_{xx}$ is constant.  Thus $u$ is a quadratic
polynomial. 
\end{proof}
\end{cor}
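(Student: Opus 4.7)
The plan is to reduce the problem to the classical Calabi-J\"orgens-Pogorelov rigidity theorem via the partial Legendre transformation of Definition~\ref{PLdef}. The first step is to verify that $\mathcal{PL}_{k,l}(u)$ is globally defined on all of $\mathbb R^n = \mathbb R^k \times \mathbb R^l$. For each fixed $x \in \mathbb R^k$, the map $y \mapsto D_y u(x,y)$ has Jacobian $D_y D_y u$, which by the uniform concavity hypothesis satisfies $-\gL I_l \leq D_y D_y u \leq -\gl I_l$. This uniform invertibility, combined with the fact that $-D_y u(x,\cdot)$ is strongly monotone (hence proper), makes $y \mapsto D_y u(x,y)$ a bi-Lipschitz diffeomorphism of $\mathbb R^l$ onto itself. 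Consequently the coordinate change $(x,y) \mapsto (x, z) = (x, D_y u(x,y))$ is a global diffeomorphism of $\mathbb R^n$, and $w := \mathcal{PL}_{k,l}(u)$ is a well-defined smooth function on all of $\mathbb R^n$.

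The second step is to apply Proposition~\ref{translaw}. By (\ref{Wcalc}), $D^2 w \geq 0$, so $w$ is a convex function on all of $\mathbb R^n$. By (\ref{dettransform}) together with the equation $F(u) = 0$, one has
\begin{align*}
\det D^2 w = \frac{\det D^2 u|_{\mathbb R^k}}{\det(-D^2 u|_{\mathbb R^l})} = e^{F(u)} = 1.
\end{align*}
The classical theorem of Calabi-J\"orgens-Pogorelov \cite{Calabi, Jorgens, Pogorelov} then asserts that any entire convex solution of $\det D^2 w = 1$ on $\mathbb R^n$ must be a quadratic polynomial. In particular, $D^2 w$ is a constant symmetric matrix.

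The final step is to run the block structure of (\ref{Wcalc}) backwards to deduce that $D^2 u$ is also constant. Reading off the lower-right block, constancy of $-u_{yy}^{-1}$ forces $u_{yy}$ to be constant. The off-diagonal block $u_{yy}^{-1} u_{yx}$ is then constant, and since $u_{yy}$ is already constant, $u_{yx}$ must be constant as well. Finally the upper-left block $u_{xx} - u_{xy} u_{yy}^{-1} u_{yx}$ being constant forces $u_{xx}$ itself to be constant. All entries of $D^2 u$ are therefore constant, so $u$ is a quadratic polynomial.

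The only subtle point in this approach is the global definiteness of the Legendre transform in the first step: it genuinely requires the uniform concavity lower bound from $\EE^{k,l,\mathbb R}_{\mathbb R^n, \gl, \gL}$, not merely strict concavity, in order to produce a convex function on the whole of $\mathbb R^n$ to which the Calabi-J\"orgens-Pogorelov theorem can be applied. After that, everything is a bookkeeping exercise extracting $u$ from the entries of $D^2 w$.
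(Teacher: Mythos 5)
Your proposal is correct and follows essentially the same route as the paper's own proof: globalize the partial Legendre transform using the uniform concavity bound, apply Proposition \ref{translaw} to get an entire convex solution of $\det D^2 w = 1$, invoke Calabi--J\"orgens--Pogorelov, and then read $D^2 u$ back off the blocks of (\ref{Wcalc}). The only difference is that you spell out the surjectivity of $y \mapsto D_y u(x,y)$ in more detail than the paper does, which is a welcome clarification rather than a deviation.
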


\begin{rmk} The direct Legendre transformation method of Corollary
\ref{rigidity} gives a Liouville-Bernstein type property for solutions to
(\ref{realelliptic}) which are uniformly concave in the second variable.  We
will give a second proof of Evans-Krylov type
regularity in \S \ref{evolsec}-\ref{eksec} which does not actually require
direct use of the Legendre transformation, but rather makes estimates directly
on the matrix (\ref{Wcalc}), which is always defined in the original
coordinates, and
has the crucial subsolution property necessary in the proof of the Evans-Krylov
estimate.  This has the key advantage of extending to the complex settings,
where plurisubharmonicity does not suffice to define a genuine Legendre
transform.
\end{rmk}

\subsection{Complex Legendre Transformation} \label{compleg}

While convexity hypotheses are natural for understanding the real Monge Amp\`ere
equation, the natural hypothesis to impose for complex Monge-Amp\`ere type
equations is plurisubharmonicity.  Many Legendre-type transformations have been
proposed for functions on $\mathbb C^n$ satisfying conditions weaker than
convexity, see for instance (\cite{Dem,Hormander,Kisel,Lempert}).  More
recently there is a proposal (\cite{Bernd}) to define a Legendre transform for
plurisubharmonic functions asymptotically, using Bergman kernels.  

Now by direct analogy, if a complex Legendre transform where defined, the
complex Hessian of the transform would be the inverse of the complex Hessian of
the function, and we could argue as above to apply known results about the
complex Monge-Amp\`ere equation to the twisted equation. While we cannot find
such a function, we operate directly on the partially transformed complex
Hessian, proceeding obliviously as if the transform where defined.  This yields
a number of nontrivial maximum
principles for (\ref{complexelliptic}) and (\ref{complexparabolic}),  which
would
otherwise be difficult to discover or motivate.

\section{Evolution Equations} \label{evolsec}
\subsection{Real Case} \label{realevol}

As explained in \S \ref{legbck} we will now derive a priori estimates for
(\ref{realelliptic})-(\ref{complexparabolic}) without the explicit use of the
Legendre transformation.  The basic idea is to think of the Legendre
transformation ``infinitesimally'' and use it as a change of variables on the
tangent space which helps us identify the right quantities/linearized operators
which have favorable maximum principles.  The first step is to recall that, in
the context of the ``pure'' Monge Amp\`ere equation for a function $w$, the
linearized operator is
\begin{align*}
\mathcal L =  w^{\ga \gb}
\frac{\del^2}{\del x^{\ga} \del x^{\gb}}.
\end{align*}
We now rewrite this operator using the partial Legendre transformation.

\begin{lemma}  Let $u \in \EE^{k,l}$, and $w = \mathcal{PL}_{k,l} u$. 
Furthermore let $T = D \Phi$ denote the change of basis matrix for the Legendre
transformation as in (\ref{Tdef}).
\begin{align*}
 L^{ab} = w^{\ga \gb} T_{\ga}^a T_{\gb}^b.
\end{align*}
Then
\begin{align*}
 L =&\ \left( \begin{matrix}
               u_{xx}^{-1} & 0\\
               0 & - u_{yy}^{-1}
              \end{matrix}
\right).
\end{align*}
\begin{proof} We emphasize that this formula is describing the matrix $L$ in
terms of the basis vectors $\left\{\frac{\del}{\del x}, \frac{\del}{\del z}
\right\}$.  First note that a direct calculation shows that
\begin{align*}
 w^{-1} =&\ \left(
 \begin{matrix}
               u_{xx}^{-1} & u^{xx} u_{xy}\\
               u_{yx} u^{xx} & - u_{yy}^{-1} + u_{yx} u^{xx} u_{xy} 
              \end{matrix} \right).
\end{align*}
We now directly compute $L$ block by block.  First:
\begin{align*}
 L^{x_i x_j} =&\ w^{\ga\gb} T^{x_i}_{\ga} T^{x_j}_{\gb} = w^{x_k x_l}
T^{x_i}_{x_k} T^{x_j}_{x_l} = u^{x_k x_l} \gd_k^i \gd_l^j = u^{x_i x_j}.
\end{align*}
Next we have
\begin{align*}
 L^{x_i z_j} =&\ w^{\ga \gb} T^{x_i}_{\ga} T^{z_j}_{\gb}\\
 =&\ w^{x_k x_l} T^{x_i}_{x_k} T^{z_j}_{x_l} + w^{x_k y_l} T^{x_i}_{x_k}
T^{z_j}_{y_l} + w^{y_k x_l} T^{x_i}_{y_k} T^{z_j}_{x_l} + w^{y_k y_l}
T^{x_i}_{y_k} T^{z_j}_{y_l}\\
 =&\ u^{x_k x_l} \gd_k^i \left( - u^{y_j y_p} u_{y_p x_l} \right) + u^{x_k x_p}
u_{x_p y_l} \gd_k^i u^{y_j y_l} + 0 + 0\\
 =&\ 0.
\end{align*}
A similar calculation shows that $L^{z_i x_j} = 0$.  Lastly we have
\begin{align*}
 L^{z_i z_j} =&\ w^{\ga\gb} T^{z_i}_{\ga} T^{z_j}_{\gb}\\
 =&\ w^{x_k x_l} T^{z_i}_{x_k} T^{z_j}_{x_l} + w^{x_k y_l} T^{z_i}_{x_k}
T^{z_j}_{y_l} + w^{y_k x_l} T^{z_i}_{y_k} T^{z_j}_{x_l} + w^{y_k y_l}
T^{z_i}_{y_k} T^{z_j}_{y_l}\\
 =&\ u^{x_k x_l} (- u^{y_i y_p} u_{y_p x_k}) (- u^{y_j y_q} u_{y_q x_l}) +
u^{x_k x_p} u_{x_p y_l} (- u^{y_i y_q} u_{y_q x_k} )u^{y_j y_l}\\
&\ + u_{y_k x_p} u^{x_p x_l} u^{y_i y_k} (- u^{y_j y_p} u_{y_p x_l}) + \left( -
u_{y_k y_l} + u_{y_k x_p} u^{x_p x_q} u_{x_q y_l} \right) u^{y_i y_k} u^{y_j
u_l}\\
=&\ - u^{y_i y_j}.
 \end{align*}
\end{proof}
\end{lemma}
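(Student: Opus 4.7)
The plan is to verify the stated identity $L^{ab} = w^{\alpha\beta} T^a_\alpha T^b_\beta$ by direct matrix computation, reading it as the matrix product $L = T\, w^{-1}\, T^{T}$. This reduces to two sub-computations: inverting the block matrix $w = D^2 \mathcal{PL}_{k,l} u$ recorded in (\ref{Wcalc}), and then multiplying $T w^{-1} T^{T}$ block by block using the explicit form of $T$ from (\ref{Tdef}).

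First I would invert the block matrix (\ref{Wcalc}) via the Schur complement formula, decomposing with respect to the lower-right block $-u_{yy}^{-1}$, whose inverse is $-u_{yy}$. A short computation shows that the $(1,1)$ Schur complement of $w$ collapses precisely to $u_{xx}$, producing the clean expression recorded in the proof above: the $(1,1)$ block of $w^{-1}$ is $u^{xx}$, the off-diagonal blocks are $u^{xx} u_{xy}$ and its transpose, and the $(2,2)$ block is $-u_{yy}^{-1} + u_{yx} u^{xx} u_{xy}$.

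Next I would compute the four blocks of $L$ in turn. Because the upper-left block of $T$ is the identity and its upper-right block vanishes, the computation of $L^{x_i x_j}$ is immediate and yields $u^{x_i x_j}$. For the mixed blocks $L^{x_i z_j}$ and $L^{z_i x_j}$, each expands into four terms; two of them vanish outright from the zero block of $T$, and the remaining two cancel after invoking the identity $u^{x_k x_p} u_{x_p y_l} u^{y_j y_l} = u^{x_k x_l} u_{x_l y_p} u^{y_j y_p}$ rearranged suitably. For $L^{z_i z_j}$ all four terms contribute, but after expansion the contributions involving $u^{xx} u_{xy}$ sandwiches cancel against one another, leaving only the surviving term $-u^{y_i y_j}$.

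The principal difficulty is not conceptual but organizational: the proof is an exercise in careful index bookkeeping, and the key step to verify cleanly is the off-diagonal vanishing. Conceptually, however, this is exactly what should occur, because the partial Legendre transform intertwines the linearized Monge-Amp\`ere operator on $w$ with an operator on $u$ whose symbol is block-diagonal in the $x$- and $y$-subspaces, so the vanishing of $L^{x_i z_j}$ and $L^{z_i x_j}$ is forced by this decoupling and serves as a computational check on the inversion and multiplication steps.
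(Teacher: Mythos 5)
Your proposal is correct and follows essentially the same route as the paper: invert the block Hessian $D^2\mathcal{PL}_{k,l}u$ (the Schur complement computation you describe is exactly what yields the stated $w^{-1}$, since the relevant Schur complement collapses to $u_{xx}$) and then evaluate $L = T\,w^{-1}\,T^{T}$ block by block, with the mixed blocks vanishing by dummy-index relabeling together with symmetry of $D^2 u$, and the $u^{xx}u_{xy}$ sandwiches cancelling in pairs in the $zz$-block. One small caveat, which you share with the paper's own display: the $(2,2)$ block of $w^{-1}$ is $-u_{yy}+u_{yx}u^{xx}u_{xy}$ (namely $D^{-1}+D^{-1}CS^{-1}BD^{-1}$ with $D=-u_{yy}^{-1}$), not $-u_{yy}^{-1}+u_{yx}u^{xx}u_{xy}$, and it is the former that is used, correctly, in the index computation of $L^{z_i z_j}$.
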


\begin{prop} \label{realsubsoln} Let $u \in \EE^{k,l,\mathbb R}$ be a solution
to
(\ref{realparabolic}). Let $W = D^2 \mathcal{PL}_{k,l} u$.  Then
\begin{align*}
\left(\dt - \LL \right) \frac{\del u}{\del t} =&\ 0,\\
\left(\dt - \LL \right) W \leq&\ 0.
\end{align*}
\begin{proof} See Lemma \ref{Wev}.
\end{proof}
\end{prop}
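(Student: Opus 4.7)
The plan is to identify $\LL$ with the linearization of $F$, which trivially yields the equation on $\dt u$, and then establish the matrix inequality on $W$ by mimicking the standard Monge--Amp\`ere subsolution computation, using the partial Legendre transformation as conceptual scaffolding while performing the actual work in the original $(x,y)$ coordinates. For the first identity, the linearization of $F(u) = \log\det u_{xx} - \log\det(-u_{yy})$ in a variation $v$ is
\begin{align*}
DF[v] = u^{x_i x_j}\, v_{x_i x_j} + (-u_{yy})^{-1}_{ij}\, v_{y_i y_j},
\end{align*}
obtained from $\partial_v \log\det M = \tr(M^{-1}\partial_v M)$ together with the chain rule on $M = -u_{yy}$. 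Since $-u_{yy}^{-1} = (-u_{yy})^{-1}$, this is exactly $\LL v = L^{ab} v_{ab}$ for the block-diagonal $L$ given in the preceding lemma, so differentiating \eqref{realparabolic} in $t$ yields $(\dt - \LL)\dt u = 0$.

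For the inequality on $W$, my guiding picture is that when $\mathcal{PL}_{k,l}$ is defined, the transform $w := \mathcal{PL}_{k,l} u$ satisfies the pure parabolic Monge--Amp\`ere equation $\dt w = \log\det D^2 w$ on $\hat\Omega$: at fixed $(x,z)$, the contribution to $\dt w$ from the moving $y$ vanishes by $\partial u/\partial y = z$, and $\log\det D^2 w = F(u) = \dt u$ by \eqref{dettransform}. The classical doubly-differentiated identity then reads
\begin{align*}
(\dt - w^{\alpha\beta}\partial_\alpha \partial_\beta)\, w_{ef} = -\, w^{\alpha\gamma} w^{\delta\beta}\, w_{\gamma\delta e}\, w_{\alpha\beta f},
\end{align*}
whose right-hand side is manifestly negative semi-definite as a matrix in $(e,f)$: contracting with $v^e v^f$ produces $-\tr\bigl((w^{-1/2} M_v w^{-1/2})^2\bigr)$ with $M_v := v^e w_{\cdot\cdot e}$. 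By the preceding lemma, pulling back along $\Phi$ replaces $w^{\alpha\beta}\partial_\alpha\partial_\beta$ by $\LL$ and $D^2 w$ by the matrix $W$ from Proposition \ref{translaw}, so one would obtain $(\dt - \LL) W \le 0$ at every point where the transform is defined.

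To obtain the conclusion unconditionally --- and in preparation for the complex case, where no genuine Legendre transform is available --- I would redo this computation directly in the $(x,y)$ coordinates, starting from the explicit block formula for $W$. Each block of $(\dt - \LL)W$ is expanded using \eqref{realparabolic}, the rule $\partial(A^{-1}) = -A^{-1}(\partial A) A^{-1}$ applied to $u_{yy}^{-1}$, and symmetry of mixed partials. The expected outcome is that all third-order mixed partials $u_{x_i x_j y_k}$, $u_{x_i y_j y_k}$, $u_{y_i y_j y_k}$ --- which appear in several guises across the four blocks of $W$ --- reorganize into the $(x,y)$-coordinate image of $-w^{\alpha\gamma} w^{\delta\beta} w_{\gamma\delta e} w_{\alpha\beta f}$, thereby assembling into a manifestly negative semi-definite symmetric matrix.

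The main obstacle is precisely this reorganization: the honest calculation is bookkeeping-heavy because third derivatives enter every block in multiple terms, and cancellations must be traced carefully to expose the ``sum of squares'' structure inherited from the pure Monge--Amp\`ere case. This is why the authors defer the explicit verification to Lemma \ref{Wev} in the appendix.
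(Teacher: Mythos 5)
Your first identity is correct and matches the paper: the linearization of $F$ is exactly $\LL$ (this is the content of the lemma computing $L^{ab}=w^{\alpha\beta}T^a_\alpha T^b_\beta$), so differentiating the equation in $t$ gives $(\dt-\LL)\dt u=0$. The matrix inequality, however, is not actually proved. Your change-of-variables shortcut is not justified as written: the preceding lemma only identifies the \emph{principal symbol} of $w^{\alpha\beta}\partial_\alpha\partial_\beta$ with $\LL$, whereas the full operator expressed in $(x,y)$ picks up first-order terms $w^{\alpha\beta}(\partial_\alpha T^c_\beta)\partial_c$, and $\partial_t$ at fixed $(x,z)$ differs from $\partial_t$ at fixed $(x,y)$ by the advection term $-u_{yy}^{-1}u_{ty}\cdot\partial_y$, since the coordinate change is time-dependent. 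One must verify that these first-order corrections cancel (an identity that itself uses the equation) before the transformed inequality becomes $(\dt-\LL)W\le 0$ in the original variables. Your fallback --- ``redo the computation in $(x,y)$ and the third-derivative terms should reorganize into a negative semidefinite matrix'' --- is precisely the assertion to be proved; declaring the expected outcome is not a proof, and in the paper's analogous computation the remainder $Q$ is not a manifest sum of squares but requires grouping the cross terms and applying Cauchy-Schwarz four times to extract the sign.

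It is also worth noting that the paper never performs the real computation at all: it sets $v_t(z_1,\dots,z_n):=u_t(\Real z_1,\dots,\Real z_n)$, observes that $v_t$ solves (\ref{complexparabolic}) and that its matrix $W$ from (\ref{cW}) coincides with $D^2\mathcal{PL}_{k,l}u$, and then quotes the fully written-out complex Lemmas \ref{complexevs} and \ref{complexQsign}. To complete your route you would need either to carry out the $(x,y)$-expansion and the sign argument explicitly, or to verify the first-order cancellation so that the local Legendre-transform argument (which is legitimate pointwise, since $u_{yy}<0$ makes $\Phi$ a local diffeomorphism) becomes rigorous.
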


\subsection{Complex Case}

As discussed in \S \ref{legbck}, there is as yet not a clear Legendre
transformation defined for plurisubharmonic functions.  Nonetheless, we draw
inspiration from \S \ref{realevol} and obtain maximum principle estimates which
correspond to usual maximum principle estimates for the complex Monge Amp\`ere
equation,
assuming the Legendre transformation were defined.  In particular, assume $u \in
\EE^{k,l}$ and set
\begin{align} \label{cLdef}
 \LL := u^{\bz_b z_a} \frac{\del^2}{\del z_a \del \bz_b} - u^{\bw_b w_a}
\frac{\del^2}{\del w_a \del \bw_b}.
\end{align}
Also, define
\begin{align} \label{cW}
 W =&\ \left(
 \begin{matrix}
  u_{z_i \bz_j} - u_{z_i \bw_k} u^{\bw_k w_l} u_{w_l \bz_j} & u_{z_i \bw_k}
u^{\bw_k w_i}\\
 u^{\bw_j w_k} u_{w_k \bz_i} & - u^{\bw_j w_i}
 \end{matrix}
\right).
\end{align}
The form of this matrix is of course derived from the corresponding quantity in
(\ref{Wcalc}).  By a direct calculation one observes that equation
(\ref{complexparabolic}) is equivalent to
\begin{align} \label{complexparequiv}
 \dt u =&\ \log \det W.
\end{align}
As in the real case, the crucial input in obtaining Evans-Krylov
type estimates
for (\ref{complexparabolic}) and (\ref{complexelliptic}) is a subsolution
property for the matrix $W$.  The proposition below is simply a long calculation
which 
follows by elementary applications of the Cauchy-Schwarz inequality, and is
carried out in \S \ref{calcsec}.

\begin{prop} \label{complexsubsoln} Let $u \in \EE^{k,l,\mathbb C}$ be a
solution to
(\ref{complexparabolic}). Then
\begin{align*}
\left(\dt - \LL \right) \frac{\del u}{\del t} =&\ 0,\\
\left(\dt - \LL \right) W \leq&\ 0.
\end{align*}
\begin{proof} This follows directly from Lemmas \ref{complexevs} and
\ref{complexQsign}. 
\end{proof}
\end{prop}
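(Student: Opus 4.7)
The plan splits naturally into two parts. The identity $(\dt - \LL)(\dt u) = 0$ is routine: differentiating the defining equation $\dt u = F_{\mathbb C}(u) = \log\det u_{z_i\bz_j} - \log\det(-u_{w_i\bw_j})$ in $t$ and using $\dt \log\det A = \tr(A^{-1}\dt A)$ yields
\[
\dt(\dt u) = u^{\bz_b z_a}(\dt u)_{z_a\bz_b} - u^{\bw_b w_a}(\dt u)_{w_a\bw_b} = \LL(\dt u).
\]

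The matrix inequality $(\dt - \LL)W \leq 0$ is the substantive content. My plan is to compute $(\dt - \LL)$ applied to each of the four blocks of $W$ directly. As a preliminary bookkeeping step, I would record the block-determinant identity (analogous to (\ref{dettransform})) showing that the parabolic equation is equivalent to the single-determinant form $\dt u = \log\det W$ of (\ref{complexparequiv}); this mirrors the real picture in which the partial Legendre transform converts the twisted equation into a standard concave Monge-Amp\`ere equation, and strongly suggests that $W$ should enjoy a subsolution property analogous to the one for $D^2(\mathcal{PL}_{k,l}u)$ established in Proposition \ref{realsubsoln}. I would then differentiate each entry of $W$ in $t$, substituting for $(\dt u)_{z_i\bz_j}$, $(\dt u)_{w_i\bw_j}$, and $(\dt u)_{z_i\bw_j}$ the expressions obtained by differentiating the equation twice in space, and expand $\LL W$ via Leibniz, producing a sum of monomials in the third derivatives of $u$ contracted against the inverses $u^{\bz z}$ and $u^{\bw w}$.

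The principal obstacle is to show that the resulting Hermitian matrix is pointwise negative semidefinite. Guided by the real calculation behind Proposition \ref{realsubsoln}, I expect substantial cancellation among the third-derivative terms, after which the residue should reassemble into a manifestly nonpositive expression of the schematic form $-\sum (\N W)^{*} W^{-1} (\N W)$, analogous to the classical subsolution identity for the concave Monge-Amp\`ere flow. The labor lies in verifying that all indefinite cross-terms arising from derivatives of the inverse block $u^{\bw w}$ inside $\LL$ are absorbed by Cauchy-Schwarz against these negative square terms, with no residual bad sign surviving. I would execute this by testing the Hermitian matrix against an arbitrary vector $(\vec v, \vec w)^{T}$ and reducing the question to a scalar quadratic inequality in the third derivatives of $u$, then treating each block-type contribution separately. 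This careful algebraic accounting, rather than any conceptual subtlety, is precisely why relegating the computation to an appendix is appropriate; the positivity of $W$ recorded in (\ref{cW}) (which parallels (\ref{Wcalc})) is what ultimately makes the Cauchy-Schwarz step close up.
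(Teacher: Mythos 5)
Your proposal follows essentially the same route as the paper: the identity for $\dt u$ is obtained by differentiating the equation in $t$ exactly as in Lemma \ref{complexevs}, and the matrix inequality is established by computing $Q=(\dt-\LL)W$ block by block and then testing the resulting Hermitian form against an arbitrary vector and absorbing the cross-terms via Cauchy--Schwarz, which is precisely the content of Lemmas \ref{complexevs} and \ref{complexQsign}. The only caveat is that your plan defers the actual index computation, which in the paper occupies the appendix and is where the cancellations (e.g.\ $A_2+A_{18}=0$, etc.) and the specific Cauchy--Schwarz groupings are verified.
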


\section{Evans-Krylov Type Estimates} \label{eksec}

In this section we establish a general oscillation estimate (Theorem \ref{genEK}
below) of Evans-Krylov type which replaces the
convexity hypothesis with its main implication in the original proofs of
Evans-Krylov: that the elliptic operator acts on a matrix of second partials
that is itself a 
subsolution of a uniformly elliptic equation in the matrix sense.  In
conjunction with the
subsolution properties arising from the partial Legendre transformation
(Propositions \ref{realsubsoln} and \ref{complexsubsoln}), we obtain Theorem
\ref{EvansKrylov} as an immediate corollary.  The proof is closely modeled after
(\cite{Lieb} Lemma 14.6).  We also refer the reader to the original works
(\cite{Evans}, \cite{Krylov}) as well as more recent versions of the proof
(\cite{CC}, \cite{CS}).  To begin we recall some standard notation and
results.

\begin{defn} Given $(w,s) \in \mathbb C^n \times \mathbb R$, let
\begin{align*}
 Q((w,s),R) := &\ \{ (z,t) \in \mathbb C^n \times \mathbb R |\ t \leq s, \quad
\max \{ \brs{z - w}, \brs{t - s}^{\frac{1}{2}} \} < R \},\\
 \Theta(R) :=&\ Q((w,s - 4 R^2),R).
\end{align*}
\end{defn}

\begin{thm} \label{pweakharnack} (\cite{Lieb} Theorem 7.37) Let $u$ be a
nonnegative function on $Q(4R)$ such that 
\begin{align*}
 - u_t + a^{ij} u_{ij} \leq 0,
\end{align*}
where
\begin{align}
 \gl \gd_i^j \leq a^{ij} \leq \gL \gd_i^j \label{lellipt}.
\end{align}
There are positive constants $C, p > 1$ depending only on $n,\gl,\gL$ such that
\begin{align} \label{weakharnack}
 \left( R^{-n-2} \int_{\Theta(R)} u^p \right)^{\frac{1}{p}} \leq C \inf_{Q(R)}
u.
\end{align}
\end{thm}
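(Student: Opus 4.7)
The plan is to adapt the Krylov--Safonov approach: combine a parabolic Alexandrov--Bakelman--Pucci (ABP) type maximum principle with a Calder\'on--Zygmund cube decomposition to obtain decay of the distribution function of $u$, then integrate. Note that the hypothesis $-u_t + a^{ij} u_{ij} \leq 0$ means $u$ is a nonnegative supersolution of a uniformly parabolic operator with bounds (\ref{lellipt}), which is the standard framework for a weak Harnack inequality.

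First I would establish a parabolic ABP estimate: for a function $v$ with $-v_t + a^{ij} v_{ij} \geq f$ on a backward cylinder, the supremum of $v^+$ on the top face is controlled by the $L^{n+1}$ norm of $f^-$ over the parabolic upper contact set. The key geometric input is that the image of the contact set under the parabolic ``normal map'' $(z,t) \mapsto (\nabla v(z,t), v(z,t) - \langle \nabla v(z,t), z \rangle)$ covers a region whose measure is bounded below by the elevation of $v$, and whose Jacobian is controlled by $\det(-v_{ij})$ and $-v_t$, both of which are in turn dominated by $f^-$ via the PDE and (\ref{lellipt}).

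Second, specialize this to $v$ built from $u$ and a suitable cutoff barrier $\varphi$: if $\inf_{Q(R)} u \leq 1$, then testing the ABP estimate on $v = (\varphi - u)^+$ produces a universal $\theta \in (0,1)$ and constant $M$ such that
\begin{align*}
\left| \{ (z,t) \in \Theta(R) : u(z,t) \leq M \} \right| \geq \theta \, |\Theta(R)|.
\end{align*}
This ``measure-to-pointwise'' step is the heart of the argument. Iterating it through a parabolic Calder\'on--Zygmund decomposition of $\Theta(R)$ into dyadic subcylinders and rescaling yields the geometric decay
\begin{align*}
\left| \{ u > M^k \} \cap \Theta(R) \right| \leq (1-\theta)^k |\Theta(R)|,
\end{align*}
from which $|\{u > t\} \cap \Theta(R)| \leq C |\Theta(R)| \, t^{-p}$ for a universal $p>0$. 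Integrating this tail bound via the layer-cake formula yields (\ref{weakharnack}) after renormalizing by $\inf_{Q(R)} u$.

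The main obstacle is the parabolic ABP step and the associated crawling-ink-spots / cube decomposition: unlike the elliptic case the forward and backward time directions play asymmetric roles, so one must work with the correct parabolic contact set and with stacked cylinders (which is precisely why the cylinder $\Theta(R)$ is shifted $4R^2$ backwards from $Q(R)$). Once these parabolic geometric preliminaries are in place the iteration is a mechanical adaptation of the elliptic Krylov--Safonov argument, and the value of $p$ produced depends only on $n,\gl,\gL$, as claimed.
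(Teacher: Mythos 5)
This statement is quoted by the paper verbatim from the reference (\cite{Lieb}, Theorem 7.37) and is not proved anywhere in the paper, so there is no internal argument to compare against; your outline is the standard Krylov--Safonov--Tso route (parabolic ABP maximum principle via the normal map, a barrier-based measure-to-pointwise estimate, a stacked-cylinder/ink-spots covering argument, power decay of the distribution function, layer-cake integration), which is essentially the proof given in the cited source. You also correctly identify why $\Theta(R)$ sits $4R^2$ earlier in time than $Q(R)$: the measure estimate must be propagated \emph{forward} in time to reach the cylinder where the infimum is taken.

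Two caveats. First, what you have written is a roadmap rather than a proof: the parabolic ABP estimate, the construction of the cutoff barrier $\varphi$, and the forward-in-time stacking lemma are each substantial pieces of work that you assert rather than establish, and the time-asymmetry you flag is precisely where the elliptic argument does not transfer mechanically. Second, the iteration as you describe it yields only some small exponent $p_0>0$ in the tail bound $\brs{\{u>t\}\cap\Theta(R)}\leq C\brs{\Theta(R)}\,t^{-p_0}$, hence integrability $u\in L^{p}$ only for $p<p_0$; the statement as transcribed in the paper asks for $p>1$, which does not follow from your sketch without further comment. This matters downstream: in the proof of Theorem \ref{genEK} the estimate is combined with Minkowski's inequality, which requires $p\geq 1$. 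If only a small $p$ is available one must either upgrade the exponent or replace Minkowski by the elementary inequality $\left(\sum_i a_i\right)^{p}\leq \sum_i a_i^{p}$ for $p\leq 1$ together with an $N$-dependent constant; either way the point deserves to be addressed explicitly.
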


\begin{thm}
\label{genEK}Suppose that $u\in\mathcal{E}_{Q(R),\lambda,\Lambda}^{k,l}\cap
C^{2,1}(Q(R))$ \ satisfies
\[
\frac{\partial}{\partial t}u=F(W(D^{2}u))
\]
for $F$ a $\left(  \lambda,\Lambda\right)  $-elliptic functional and suppose
that the Hermitian matrix $W$ satisfies%
\[
\left(  \frac{\partial}{\partial t}-L\right)  W\leq0,
\]
for some $\left(  \lambda,\Lambda\right)  $-elliptic operator $L.$ \ Then
there are positive constants $\alpha,C$ depending only on $n,\lambda,\Lambda$
such that for all $\rho<R$,
\[
\osc_{Q(\rho)}u_{t}+\osc_{Q(\rho)}W\leq C(n,\lambda,\Lambda)\left(  \frac
{\rho}{R}\right)  ^{\alpha}\left(  \osc_{Q(R)}u_{t}+\osc_{Q(R)}W\right)  .
\]

\end{thm}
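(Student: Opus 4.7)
The plan is to adapt the Lieberman proof of Evans-Krylov (Lemma 14.6 of his book) by substituting the role of concavity of $F$ with the given subsolution hypothesis $(\partial_t - L)W \leq 0$. Concretely, the target is the one-step oscillation decay
$$\omega(R/2) \leq \delta\,\omega(R), \qquad \omega(r) := \osc_{Q(r)} u_t + \osc_{Q(r)} W,$$
for some $\delta = \delta(n,\lambda,\Lambda) < 1$, since the standard dyadic iteration then promotes this to the claimed H\"older decay on every $Q(\rho)$.

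The first step is to identify enough scalar subsolutions to which the parabolic weak Harnack inequality of Theorem \ref{pweakharnack} applies. For any unit vector $\xi$, the scalar projection $V_\xi := \xi^{\ast} W \xi$ inherits $(\partial_t - L) V_\xi \leq 0$ from the matrix inequality, so with $M_\xi := \sup_{\Theta(R)} V_\xi$ the function $M_\xi - V_\xi \geq 0$ is a supersolution of a $(\lambda,\Lambda)$-elliptic linear parabolic equation. Differentiating $u_t = F(W)$ in time and applying the chain rule together with $(\lambda,\Lambda)$-ellipticity of $F$ shows $u_t$ satisfies its own linear parabolic equation of the same ellipticity type, hence $\sup u_t - u_t$ and $u_t - \inf u_t$ are also nonnegative supersolutions. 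Next, select a finite family of unit vectors $\xi_1,\dots,\xi_N$, with $N = N(n)$, spanning the space of Hermitian matrices in the standard Evans-Krylov sense: every Hermitian $A$ decomposes as $A = \sum_k c_k\,\xi_k \xi_k^{\ast}$ with $|c_k| \leq C(n)\|A\|$. Applying the weak Harnack to each $M_k - V_{\xi_k}$ yields the familiar dichotomy: either $\sup_{Q(R/2)} V_{\xi_k} \leq M_k - \eta\,\osc_{\Theta(R)} V_{\xi_k}$ for some $\eta = \eta(n,\lambda,\Lambda) > 0$, producing a direct contraction of $\osc V_{\xi_k}$, or $V_{\xi_k}$ lies within an arbitrarily small fraction of $M_k$ on a set of definite measure in $\Theta(R)$.

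The equation $u_t = F(W)$ couples this dichotomy back to the oscillation of $u_t$. Since $F$ is $(\lambda,\Lambda)$-elliptic, integrating its linearization along the matrix segment from $W(Q)$ to $W(P)$ gives, for any two space-time points $P, Q$,
$$u_t(P) - u_t(Q) = \tr\bigl(A(P,Q)\,[W(P) - W(Q)]\bigr), \qquad \lambda I \leq A(P,Q) \leq \Lambda I.$$
Expanding $W(P) - W(Q)$ in the spanning basis converts pointwise smallness of all the $M_k - V_{\xi_k}$ on a large-measure set into $L^p$ closeness of $u_t$ to a constant; combined with the supersolution-side weak Harnack applied to $u_t$ itself, this forces $\osc_{Q(R/2)} u_t$ to contract by a definite factor whenever no $V_{\xi_k}$ contracts. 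Either branch of the dichotomy thus delivers $\omega(R/2) \leq \delta\,\omega(R)$, and iteration concludes.

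The main obstacle is that the subsolution hypothesis on $W$ delivers weak Harnack only from the ``top'' side, as $\sup V_\xi - V_\xi$ is a supersolution while $V_\xi - \inf V_\xi$ generally is not; controlling the full oscillation of the Hermitian matrix $W$ in operator norm therefore forces us to route the ``bottom'' side through the equation $u_t = F(W)$ and the spanning decomposition. The bookkeeping to verify that the spanning set $\{\xi_k\}$ interacts correctly with both the weak Harnack $L^p$ averages and the integrated linearization $A(P,Q)$ is the delicate point. Once that is arranged, the substitution of $W$ for $D^2 u$ in the Lieberman template is transparent, since neither the spanning lemma nor Theorem \ref{pweakharnack} depends on any specific geometric origin of the matrix-valued subsolution.
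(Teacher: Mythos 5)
Your ingredients are the right ones (weak Harnack for the ``top'' sides, the integrated linearization identity, a finite spanning family of rank-one directions), and you correctly identify the crux --- that $V_\xi-\inf V_\xi$ is not a supersolution --- but the way you close the argument has a genuine gap. Your dichotomy ends with: either some single $V_{\xi_k}$ contracts, or $\osc_{Q(R/2)}u_t$ contracts. Neither alternative yields $\omega(R/2)\leq\delta\,\omega(R)$ for $\omega=\osc u_t+\osc W$: in the first branch the other components of $W$ and $u_t$ need not contract, and in the second branch $\osc W$ need not contract at all --- the weak Harnack inequality only tells you that $M_k-V_{\xi_k}$ is small in $L^p$ on $\Theta(R)$, which is compatible with $V_{\xi_k}$ dipping far below $M_k$ on a small set inside $Q(R/2)$, so the pointwise infimum of $V_{\xi_k}$, and hence $\osc_{Q(R/2)}W$, is not controlled. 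Your branch-2 reasoning also runs in the wrong direction: you use smallness of $M_k-V_{\xi_k}$ to control $u_t$, whereas what is needed is to control the bottom of each $V_{\xi_\beta}$ by the tops of the other components together with $u_t$.

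The missing step is the pointwise algebraic inequality that the paper extracts from the linearization. Include $w_0:=-u_t$ among the $w_\alpha$ (it solves the linear equation, so both signs get weak Harnack), and decompose the \emph{positive} linearization matrix $a^{ij}$ via Lieberman's Lemma 14.5 as $\sum_\alpha f_\alpha v_\alpha^i\bar{v_\alpha^j}$ with coefficients $f_\alpha\in[\lambda_*,\Lambda_*]$ that are uniformly \emph{positive} --- not the signed decomposition of a general Hermitian matrix you invoke, whose signed coefficients would break the next step. Fixing $\beta$ and evaluating the resulting identity $\sum_\alpha f_\alpha(w_\alpha(x,t_1)-w_\alpha(y,t))=0$ at a point $(x,t_1)$ where $w_\beta$ attains its minimum $m_{4\beta}$ over $Q(4R)$ gives, for \emph{every} $(y,t)\in Q(4R)$,
\begin{equation*}
\lambda_*\left(w_\beta(y,t)-m_{4\beta}\right)\ \leq\ \Lambda_*\sum_{\alpha\neq\beta}\left(M_{4\alpha}-w_\alpha(y,t)\right),
\end{equation*}
where positivity of $f_\alpha$ and $w_\alpha(x,t_1)\leq M_{4\alpha}$ are both used. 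This converts the weak-Harnack $L^p$ bounds on the tops into an $L^p$ bound on the bottom of $w_\beta$, so that the full oscillation $M_{4\beta}-m_{4\beta}$ of \emph{every} component is bounded by $C\left[P(4R)-P(R)\right]$ with $P=\sum_\alpha(M_\alpha-m_\alpha)$; summing over $\beta$ gives $P(R)\leq\mu P(4R)$ and the iteration proceeds for the whole sum $P$, which is comparable to $\omega$ after polarization. Without this pointwise transfer from the tops to the bottoms, the contraction of $\omega$ does not follow.
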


\begin{proof}
We are assuming that $F$ is $\left(  \lambda,\Lambda\right)  $-elliptic on a
convex set containing the range of $W.$ Thus for any two points $(x,t_{1}%
),(y,t_{2})$ $\in$ $Q(4R)$ there exists a matrix $a^{ij}$ satisfying
(\ref{lellipt}) such that

\begin{align} \label{conclogdet}
a^{ij}\left(  (x,t_{1}),(y,t_{2})\right)  \left(  W_{ij}(x,t_{1}%
)-W_{ij}(y,t_{2})\right)  =\frac{\partial u}{\partial t}(x,t_{1}%
)-\frac{\partial u}{\partial t}(y,t_{2}).
\end{align}
Now by (\cite{Lieb} Lemma 14.5)
we can choose a finite set of unit vectors $v_{\alpha}$ such that
\[
a^{ij}=\sum_{\alpha=1}^{N}f_{\alpha}v_{\alpha}^{i}\bar{v_{\alpha}^{j}}%
\]
for $f_{\alpha}$ depending on $(x,t_{1}),(y,t_{2})$ yet always satisfying
\[
f_{\alpha}\in\lbrack\lambda_{\ast},\Lambda_{\ast}],\text{ }%
\]
for some constants $\gl_*, \Lambda_*$ depending only on $\gl,\gL$.  Now let
$f_{0}=1$, and let
\begin{align*}
w_{0}:=  &  \ -\frac{\partial u}{\partial t}\\
w_{\alpha}:=  &  \ W_{v_{\alpha}\bar{v_{\alpha}}}.
\end{align*}
In this notation (\ref{conclogdet}) reads
\[
\sum_{\alpha=0}^{N\ }f_{\alpha}\left(  w_{\alpha}(x,t_{1})-w_{\alpha}%
(y,t_{2})\right)  =0.
\]
It follows that for every pair $(x,t_{1}),(y,t_{2})\in
Q(4R)$, we have
\[
f_{\alpha}\left(  w_{\alpha}(y,t_{2})-w_{\alpha}(x,t_{1})\right)  =\sum
_{\beta\neq\alpha}f_{\beta}\left(  w_{\beta}(x,t_{1})-w_{\beta}(y,t_{2}%
)\right)  .
\]
Now let
\[
M_{s\alpha}=\sup_{Q(sR)}w_{\alpha},\qquad m_{s\alpha}=\inf_{Q(sR)}w_{\alpha
},\qquad P(sR)=\sum_{\alpha}M_{s\alpha}-m_{s\alpha}.
\]
As each quantity $w_{\ga}$ is a subsolution to a uniformly parabolic equation,
it follows that $M_{4\ga} - w_{\ga}$ is a supersolution to a uniformly parabolic
equation, and hence by Theorem \ref{pweakharnack} we obtain
\begin{align*}
 \left( R^{-n-2} \int_{\Theta(R)} \left(M_{4 \ga} - w_{\ga} \right)^p
\right)^{\frac{1}{p}} \leq&\ C_1 \inf_{Q(R)}( M_{4 \ga} - w_{\ga} )= C_1 (M_{4
\ga} - M_{1 \ga}).
\end{align*}
Summing these inequalities and applying Minkowski's inequality yields, for any
fixed $\beta$,
\begin{gather} \label{Lieb367a}
\begin{split}
\left(  R^{-n-2}\int_{\Theta(R)}\sum_{\alpha\neq\beta}(M_{4\alpha}-w_{\alpha
})^{p}\right)  ^{\frac{1}{p}}  &  \leq \sum_{\alpha\neq\beta}\left(
R^{-n-2}\int_{\Theta(R)}\left(  M_{4\alpha}-w_{\alpha}\right)  ^{p}\right)
^{\frac{1}{p}}\\
&  \leq C_{1}\sum_{\alpha\neq\beta}M_{4\alpha}-M_{1\alpha}\\
&  \leq C_{1}\sum_{\alpha\neq\beta}M_{4\alpha}-m_{4\alpha}- \left(M_{1\alpha
}-m_{1\alpha} \right)\\
&  \leq C_{1}\left[  P(4R)-P(R)\right].
\end{split}
\end{gather}
Now choose $\left(x,t\right)  \in Q(4R)$ such that
\[
W_{\beta}\left(  x,t_{1}\right)  =m_{4\beta.}%
\]
We have
\[
\lambda_{\ast}\left(  W_{\beta}\left(  y,t\right)  -m_{4\beta}\right)  \leq
f_{\beta}\left(  y,t\right) \left(  W_{\beta}\left(  y,t\right)
-m_{4\beta}\right)  =\sum_{\alpha\neq\beta}f_{\alpha}\left(  y,t\right)
\left(  w_{\alpha}(x,t_{1})-w_{\alpha}(y,t)\right)  .
\]
Thus for all $\left(y,t\right)  \in Q(4R)$
\begin{align*}
\left(  W_{\beta}\left(  y,t\right)  -m_{4\beta}\right) &\leq\frac
{1}{\lambda_{\ast}}\sum_{\alpha\neq\beta}f_{\alpha}\left(  y,t\right)
\left(  w_{\alpha}(x,t_{1})-w_{\alpha}(y,t)\right)\\
& \leq\frac{1}
{\lambda_{\ast}}\sum_{\alpha\neq\beta}f_{\alpha}\left(  y,t\right) \left(
M_{4\alpha}-w_{\alpha}(y,t)\right) \\
&  \leq\frac{\Lambda_{\ast}}{\lambda_{\ast}}\sum_{\alpha\neq\beta}\left(
M_{4\alpha}-w_{\alpha}(y,t)\right).
\end{align*}
Hence, using convexity of $s \to s^p$ we have
\begin{align*}
\left(  W_{\beta}\left(  y,t\right)  -m_{4\beta}\right)  ^{p}  &  \leq\left[
\frac{\Lambda_{\ast}}{\lambda_{\ast}}\sum_{\alpha\neq\beta}\left(  M_{4\alpha
}-w_{\alpha}(y,t)\right)  \right]  ^{p}\\
&  \leq\left(  \frac{\Lambda_{\ast}}{\lambda_{\ast}}\right)  ^{p}N^{p-1}%
\sum_{\alpha\neq\beta}\left(  M_{4\alpha}-w_{\alpha}(y,t)\right)  ^{p}.%
\end{align*}
Integrating in the $(y,t)$ variables and applying (\ref{Lieb367a}) we have
\begin{align*}
\left(  R^{-n-2}\int_{\Theta(R)}\left(  W_{\beta}\left(  y,t\right)
-m_{4\beta}\right)  ^{p}\right)  ^{\frac{1}{p}}  &  \leq\frac{\Lambda_{\ast}%
}{\lambda_{\ast}}N^{1-\frac{1}{p}}\left(  R^{-n-2}\int_{\Theta(R)} \sum_{\alpha
\neq\beta}\left(  M_{4\alpha}-w_{\alpha}(y,t)\right)  ^{p}\right)
^{\frac{1}{p}}\\
&  \leq\frac{\Lambda_{\ast}}{\lambda_{\ast}}NC_{1}\left[  P(4R)-P(R)\right].
\end{align*}
Summing over $\beta$ we have
\begin{equation}
\sum_{\beta}\left(  R^{-n-2}\int_{\Theta(R)}\left(  W_{\beta}\left(
y,t\right)  -m_{4\beta}\right)  ^{p}\right)  ^{\frac{1}{p}}\leq\left(
N+1\right)  \frac{\Lambda_{\ast}}{\lambda_{\ast}}NC_{1}\left[
P(4R)-P(R)\right].  \label{bound1}%
\end{equation}
Also, a direct application of Theorem \ref{pweakharnack} yields
\begin{gather} \label{bound2}
\begin{split}
\sum_{\beta}\left(  R^{-n-2}\int_{\Theta(R)}\left(  M_{4\beta}-W_{\beta
}\left(  y,t\right)  \right)  ^{p}\right)  ^{\frac{1}{p}} &  \leq\sum_{\beta
}C_{1}\left(  M_{4\beta}-M_{1\beta}\right)\\
&  \leq C_{1}\left[  P(4R)-P(R)\right]
\end{split}
\end{gather}
Applying Minkowski's inequality, followed by (\ref{bound1}), (\ref{bound2})
gives
\begin{align*}
P(4R) &  =\sum_{\beta}M_{4\beta}-m_{4\beta}\\
&  =\sum_{\beta}\left(  R^{-n-2}\int_{\Theta(R)} \left( 
M_{4\beta}-m_{4\beta}\right)
^{p}\right)  ^{1/p}\\
& =\sum_{\beta}\left(  R^{-n-2}\int\left(  M_{4\beta
}-W_{\beta}\left(  y,t\right)  +W_{\beta}\left(  y,t\right)  -m_{4\beta
}\right)  ^{p}\right)  ^{1/p}\\
&  \leq \sum_{\beta} \left(  R^{-n-2}\int_{\Theta(R)}\left( 
M_{4\beta}-W_{\beta}\left(
y,t\right)  \right)  ^{p}\right)  ^{\frac{1}{p}}+\sum_{\beta}\left(
R^{-n-2}\int_{\Theta(R)}\left(  W_{\beta}\left(  y,t\right)  -m_{4\beta
}\right)  ^{p}\right)  ^{\frac{1}{p}}\\
&  \leq\left(  \left(  N+1\right)  \frac{\Lambda_{\ast}}{\lambda_{\ast}%
}N+1\right)  C_{1}\left[  P(4R)-P(R)\right].
\end{align*}
Rearranging this yields
\[
\left(  \left(  N+1\right)  \frac{\Lambda_{\ast}}{\lambda_{\ast}}N+1\right)
C_{1}P(R)\leq\left[  \left(  \left(  N+1\right)  \frac{\Lambda_{\ast}}%
{\lambda_{\ast}}N+1\right)  C_{1}-1\right]  P(4R),
\]
or in other words
\[
P(R)\leq\mu P(4R)
\]
for appropriately chosen $\mu$.  A standard iteration argument  (\cite[Theorem
8.23]{GT}) now yields%
\begin{equation}
P(\rho)\leq\left(  \frac{\rho}{R}\right)  ^{\alpha}P(R),\label{decaydetails1}%
\end{equation}
for%
\[
\alpha=\frac{1}{2}\frac{\log\frac{1}{\mu}}{\log(4)}.
\]
Now observe that in our application of (\cite{Lieb}, Lemma 14.5) we can
choose the set of vectors to contain all vectors of the form $e_{j} ,$
$\left(  e_{j} \pm e_{k}\right)  /\sqrt{2}$ and $\left(  e_{j} \pm \sqrt{-1}
e_{k}\right)  /\sqrt{2}$ for any particular coordinate basis. 
Trivially we observe%

\[
\sum_{j}\osc_{Q(\rho)}W_{j \bar{j}} \leq P(\rho),
\]
and then using polarization and the triangle inequality we may similarly bound
the oscillation of any of the components of $W.$

We also note that clearly
\begin{equation}
P(R)\leq N\osc_{Q(\rho)}W.\label{decaydetails3}%
\end{equation}

Thus 
\[
\osc_{Q(\rho)}W\leq C_2(N)
\left(  \frac{\rho}{R}\right)  ^{\alpha
}\osc_{Q(R)}W,
\]
which is the conclusion of the theorem.
\end{proof}

\begin{proof}[Proof of Theorem \ref{EvansKrylov}] Let us give the argument for
(\ref{complexparabolic}), the other cases being similar.  Let $u \in
\EE^{k,l}_{Q_2,\gl,\gL}$ be a solution to (\ref{complexparabolic}).  Observe
that uniform estimates on the complex Hessian of $u$ imply uniform upper and
lower bounds on the corresponding matrix $W$.  By (\ref{complexparequiv}) and
Proposition \ref{complexsubsoln}, we see that $u$ satisfies the hypotheses of
Theorem \ref{genEK}, and therefore we conclude a $C^{\ga}$ estimate for $\dt u$
and $W$.  Examining (\ref{cW}), we see that the lower right block  $-u^{\bw_j
w_i}$ has a
$C^{\ga}$ estimate, which implies that $u_{w_i \bw_j}$ does as well, using the
uniform lower bounds on the matrix.   Combining this estimate with the estimate
on the upper right and lower left blocks  we see that the derivatives $u_{z_i
\bw_j}$
and $u_{w_i \bz_j}$ also enjoy $C^{\ga}$ estimates.  Finally, these estimates
together with the estimates for the upper left block of $W$ imply a $C^{\ga}$
estimate for
$u_{z_i\bz_j}$.
\end{proof}

\begin{cor} \label{complexrigidity} Let $u_t$ be a solution to
(\ref{complexparabolic}) on $(-\infty,0] \times \mathbb C^n$ such that $u_t \in
\EE_{\mathbb C^n,\gl,\gL}^{k,l}$ for all $t \in (-\infty,0]$.  Then $\N \del
\delb u_t = \frac{\del^2 u}{\del t^2} = 0$
for all $t$.
\begin{proof} Suppose there exists a point such that $\brs{\N \del \delb u} \neq
0$.  By translating in space and time we can assume without loss of generality
this point is $(0,0)$.  Fix some $\mu > 0$ and consider
\begin{align*}
v(x,t) := \mu^{-2} u(\mu x, \mu^2 t) 
\end{align*}
By direct calculation one verifies that $v$ is a solution to
(\ref{complexparabolic}) on $(-\infty,0] \times \mathbb C^n$ and moreover $v_t
\in \EE_{\mathbb C^n,\gl,\gL}^{k,l}$ for all $t \in (-\infty,0]$.  Also observe
that $\brs{\N \del \delb v}(0,0) = \mu \brs{\N \del \delb u}(0,0)$.  However, we
know by Theorem \ref{EvansKrylov} that there is a uniform $C, \ga > 0$ so that
$\brs{\del\delb v}_{C^{\ga}(Q((0,0),1)} \leq C$.  By modifying $v$ by a
time-independent affine function we can ensure a uniform $C^0$ estimate for $v$
on $Q((0,0),1)$ as well.  At this point we may apply Schauder estimates on
$Q((0,0),1)$
to obtain an a priori bound for $\brs{\N \del \delb v}(0,0)$.  For $\mu$ chosen
sufficiently large this is a contradiction, finishing the proof.
\end{proof}
\end{cor}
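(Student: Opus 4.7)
The natural approach is a blow-up / Liouville argument exploiting the parabolic scale invariance of (\ref{complexparabolic}) together with the $C^{2,\ga}$ estimate of Theorem \ref{EvansKrylov}. My plan is to argue by contradiction: if $|\N\del\delb u|$ were nonzero at some spacetime point, translate that point to $(0,0)$ and, for each $\mu > 0$, introduce the parabolic rescaling
\[
v(x,t) := \mu^{-2} u(\mu x, \mu^2 t).
\]
Because $\i\del\delb v(x,t) = \i\del\delb u(\mu x,\mu^2 t)$ and $\del_t v(x,t) = \del_t u(\mu x,\mu^2 t)$, the rescaled function $v$ again solves (\ref{complexparabolic}) on $(-\infty,0]\times\mathbb C^n$ and lies in the same ellipticity class $\EE^{k,l}_{\mathbb C^n,\gl,\gL}$, uniformly in $\mu$. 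The point of the rescaling is that third derivatives pick up one factor of $\mu$: a direct chain-rule computation gives $|\N\del\delb v|(0,0) = \mu\,|\N\del\delb u|(0,0)$, which tends to infinity as $\mu\to\infty$.

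To close the argument I need a uniform-in-$\mu$ pointwise upper bound on $|\N\del\delb v|(0,0)$. I apply Theorem \ref{EvansKrylov} to $v$ on the parabolic cylinder $Q((0,0),1)$, obtaining a $C^{2,\ga}$ estimate with constants depending only on $n,\gl,\gL$. Since this estimate controls only the derivatives of $v$, I first subtract an appropriate time-independent affine function from $v$ to normalize its $C^0$ norm on the cylinder; this modification affects neither the equation nor the third-order quantity of interest. With the $C^{\ga}$ bound on the complex Hessian in hand, classical interior parabolic Schauder estimates applied to the linearized equation (whose coefficients now enjoy uniform $C^{\ga}$ bounds) upgrade the regularity by one derivative and yield the desired uniform pointwise bound on $|\N\del\delb v|(0,0)$. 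Taking $\mu$ sufficiently large then forces a contradiction, so $\N\del\delb u \equiv 0$.

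The second assertion $\del_t^2 u \equiv 0$ comes essentially for free once $\del\delb u$ is known to be spatially constant: the right-hand side $F_{\mathbb C}(u) = \del_t u$ is then spatially constant, which forces $\del\delb u$ to also be temporally constant, whence $\del_t^2 u$ vanishes. I expect the only nonroutine point in the argument to be the Schauder bootstrap that converts the $C^{2,\ga}$ bound of Theorem \ref{EvansKrylov} into a pointwise bound on third derivatives; the rest is careful bookkeeping of the parabolic scaling, and the crucial observation that the class $\EE^{k,l}_{\mathbb C^n,\gl,\gL}$ is preserved under $(x,t)\mapsto(\mu x,\mu^2 t)$.
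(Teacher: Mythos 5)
Your proposal is correct and follows essentially the same route as the paper: contradiction via parabolic rescaling $v(x,t)=\mu^{-2}u(\mu x,\mu^2 t)$, invariance of the equation and of the class $\EE^{k,l}_{\mathbb C^n,\gl,\gL}$, the factor-of-$\mu$ growth of $\brs{\N\del\delb v}(0,0)$, and a uniform bound from Theorem \ref{EvansKrylov} plus an affine normalization and Schauder estimates. Your closing remark deducing $\frac{\del^2 u}{\del t^2}=0$ from the vanishing of $\N\del\delb u$ is a sensible addition that the paper leaves implicit.
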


\section{Evans-Krylov Regularity for Pluriclosed Flow} \label{PCFsec}
\subsection{Commuting Generalized K\"ahler manifolds} \label{compcfss}

In this section we exploit the estimates of \S \ref{eksec} to establish a priori
regularity results for the pluriclosed flow.  We briefly recall here the
discussion in (\cite{SPCFSTB}) wherein the pluriclosed flow in the setting of
generalized K\"ahler geometry with commuting complex structures is reduced to a
parabolic flow of the kind (\ref{complexparabolic}).  

Let $(M^{2n}, g, J_A, J_B)$ be a generalized K\"ahler manifold satisfying
$[J_A,J_B] = 0$.  Define
\begin{align*}
 \Pi := J_A J_B \in \End(TM).
\end{align*}
It follows that $\Pi^2 = \Id$, and $\Pi$ is $g$-orthogonal, hence $\Pi$ defines
a $g$-orthogonal decomposition into its $\pm 1$ eigenspaces, which we denote
\begin{align*}
 TM = T_+ M \oplus T_- M.
\end{align*}
Moreover, on the complex manifold $(M^{2n}, J_A)$ we can similarly decompose the
complexified tangent bundle $T_{\mathbb C}^{1,0}$.  For notational simplicity we
will denote
\begin{align*}
 T_{\pm}^{1,0} := \ker \left( \Pi \mp I \right) : T^{1,0}_{\mathbb C}
(M, J_A) \to T^{1,0}_{\mathbb C} (M, J_A).
\end{align*}
We use similar notation to denote the pieces of the complex cotangent bundle. 
Other tensor bundles inherit similar decompositions.  The one of most importance
to us is
\begin{align*}
 \Lambda^{1,1}_{\mathbb C}(M, J_A) =&\ \left(\Lambda^{1,0}_+ \oplus
\Lambda^{1,0}_- \right) \wedge \left(\Lambda^{0,1}_+ \oplus \Lambda^{0,1}_-
\right)\\
=&\ \left[\Lambda^{1,0}_+ \wedge \Lambda_+^{0,1} \right] \oplus
\left[\Lambda^{1,0}_+ \wedge \Lambda_-^{0,1} \right] \oplus
\left[\Lambda^{1,0}_- \wedge \Lambda_+^{0,1} \right] \oplus
\left[\Lambda^{1,0}_- \wedge \Lambda_-^{0,1} \right].
\end{align*}
Given $\mu \in \Lambda^{1,1}_{\mathbb C}(M, J_A)$ we will denote this
decomposition as
\begin{align} \label{oneoneproj}
 \mu := \mu^+ + \mu^{\pm} + \mu^{\mp} + \mu^-
\end{align}

\begin{defn} \label{chidef} Let $(M^{2n}, J_{A}, J_B)$ be a bicomplex manifold
such that $[J_A,J_B] = 0$.  Let
\begin{align*}
\chi(J_A,J_B) = c^+_1(T^{1,0}_+) - c^-_1(T^{1,0}_{+}) +
c_1^-(T^{1,0}_{-}) - c_1^+(T^{1,0}_{-}).
\end{align*}
The meaning of this formula is the following: fix Hermitian metrics $h_{\pm}$ on
the holomorphic line bundles $\det T^{1,0}_{\pm}$, and use these to define
elements of $c_1(T^{1,0}_{\pm})$, and then project according to the
decomposition (\ref{oneoneproj}).  In particular, given such metrics $h_{\pm}$
we let $\rho(h_{\pm})$ denote the associated representatives of
$c_1(T_{\pm}^{1,0})$, and then let
\begin{align*}
 \chi(h_{\pm}) = \rho^+(h_+) - \rho^{-}(h_+) + \rho^-(h_-) - \rho^+(h_-).
\end{align*}
This definition yields a well-defined class
in a certain cohomology group, defined in \cite{SPCFSTB}, which we now describe.
\end{defn}

\begin{defn} Let $(M^{2n}, J_A, J_B)$ be a bihermitian manifold with $[J_A,J_B]
= 0$.  Given $\phi_A \in \Lambda^{1,1}_{J_A,\mathbb R}$, let $\phi_B = - \phi_A(
\Pi
\cdot, \cdot) \in \Lambda^{1,1}_{J_B,\mathbb R}$.  We say that $\phi_A$ is
\emph{formally generalized K\"ahler} if
\begin{gather} \label{FGK}
\begin{split}
d^c_{J_A} \phi_A =&\ - d^c_{J_B} \phi_B\\
d d^c_{J_A} \phi_A =&\ 0.
\end{split}
\end{gather}
\end{defn}

\begin{defn} Let $(M^{2n}, g, J_A, J_B)$ denote a generalized K\"ahler manifold
such that $[J_A,J_B] = 0$.  Let
\begin{align*}
 \mathcal H := \frac{ \left\{ \phi_A \in \Lambda^{1,1}_{J_A, \mathbb R}\ |  \
\phi_A 
\mbox{ satisfies } (\ref{FGK}) \right\}}{ \left\{ \gd_+ \gd^c_+ f -
\gd_- \gd^c_- f \right\}}.
\end{align*}
\end{defn}

With this kind of cohomology space, we can define the analogous notion to the
``K\"ahler cone,'' which we refer to as $\mathcal P$, the ``positive cone.''

\begin{defn}
 Let $(M^{2n}, g, J_A, J_B)$ denote a generalized K\"ahler manifold
such that $[J_A,J_B] = 0$.  Let
\begin{align*}
 \mathcal P := \left\{ [\phi] \in \HH\ |\ \exists \gw \in [\phi], \gw > 0
\right\}.
\end{align*}
\end{defn}

\begin{defn} Let $(M^{2n}, g,J_{A}, J_B)$ be a generalized K\"ahler manifold
such that $[J_A,J_B] = 0$.  We say that $\chi = \chi(J_A,J_B) > 0$, (resp.
$(\chi < 0,\ \chi =
0)$ if $\chi \in \mathcal P$, (resp. $- \chi \in \mathcal P, \chi = 0$).
\end{defn}

\subsection{Scalar reduction and Evans-Krylov estimate} \label{pcfeksec}

In this subsection we describe how to reduce (\ref{PCF}) to a scalar PDE in the
setting of commuting generalized K\"ahler manifolds.  This is then used in
conjunction with Theorem \ref{EvansKrylov} to establish Corollary \ref{PCFEK}.

First we recall that it follows from (\cite{SPCFSTB} Proposition 3.2, Lemma 3.4)
that the pluriclosed flow in this setting reduces to
\begin{align} \label{PCFGK}
 \dt \gw =&\ - \chi(g_{\pm}).
\end{align}
From the discussion in \S \ref{compcfss} we see that a solution to (\ref{PCFGK})
induces a solution to an ODE in $\mathcal P$, namely
\begin{align*}
 [\gw_t] = [\gw_0] - t \chi.
\end{align*}
This suggests the following definition.
\begin{defn} \label{taustardef} Given $(M^{2n}, g, J_A, J_B)$ a generalized
K\"ahler manifold, let
\begin{align*}
 \tau^*(g) := \sup \left\{ t \geq 0 | [\gw] - t \chi \in \mathcal P \right\}.
\end{align*}
\end{defn}
Now fix $\tau < \tau^*$, so that by hypothesis if we fix arbitrary metrics
$\til{h}_{\pm}$ on $T^{1,0}_{\pm}$, there exists $a \in C^{\infty}(M)$ such that
\begin{align*}
 \gw_0 - \tau \chi(\til{h}_{\pm}) + \left( \gd_+ \gd^c_+ - \gd_- \gd^c_- \right)
a > 0.
\end{align*}
Now set $h_{\pm} = e^{\pm \frac{a}{2 \tau}} \til{h}_{\pm}$.  Thus $\gw_0 - \tau
\chi(h_{\pm}) > 0$, and by convexity it follows that
\begin{align*}
 \hat{\gw}_t := \gw_0 - t \chi(h_{\pm})
\end{align*}
is a smooth one-parameter family of metrics.  Furthermore, given a function $f
\in C^{\infty}(M)$, let
\begin{align*}
 \gw_f := \hat{\gw} + \left( \gd_+ \gd^c_+ - \gd_- \gd^c_- \right) f,
\end{align*}
with $g^f$ the associated Hermitian metric.  Now suppose that $u_t$ satisfies
\begin{align} \label{scalarPCF}
 \dt u =&\ \log \frac{\det g_+^u \det h_-}{\det h_+ \det g_-^u}.
\end{align}
An elementary calculation using the transgression formula for the first Chern
class (\cite{SPCFSTB} Lemma 3.4) yields that $\gw_u$ solves (\ref{PCFGK}).

\begin{proof}[Proof of Theorem \ref{PCFEK}] Since by hypothesis $\tau <
\tau^*$ we can adopt the discussion above and reduce our solution to the scalar
flow (\ref{scalarPCF}).  We recall the quantity $\gU = \gU(g,h)$ defined by the
difference of the Chern connections associated to $g$ and $h$.  In particular,
we have
\begin{align*}
\gU_{ij}^k =&\ \left( \N^g - \N^h \right)_{ij}^k = g^{\bl k} g_{i \bl,j} -
h^{\bl k} h_{i \bl,j}.
\end{align*}
For a reduced solution to pluriclosed flow as above, there exists some
background tensor $A = A(\hat{g},h,\tau)$ such that
\begin{align*}
\gU_{ij}^k =&\ g^{\bl k} \left[ (\gd_+ \gd_+^c - \gd_- \gd_-^c) u
\right]_{i\bl,j} + A_{ij}^k.
\end{align*}
By a standard argument using Schauder estimates, to prove the theorem
it suffices to show that there exists a uniform constant $C$ such that 
\begin{align*}
\sup_{M \times [0,\tau)} \brs{\gU(g,h)}^2 \leq C.
\end{align*}
If this were not the case, choose a sequence $(x_i,t_i)$ such that $t_i \to
\tau$
and 
\begin{align*}
\gl_i := \brs{\gU}^2(x_i,t_i) = \sup_{M \times [0,t_i]} \brs{\gU}^2.
\end{align*}
We now construct a blowup sequence of solutions centered at these points. 
Furthermore we can choose a small constant $R > 0$ and a
normal coordinate chart for $g_0$ centered at $x_i$, covering $B_R(x_i,
g_0)$.  Using such charts for each $i$ and translating in time we obtain a
solution to
(\ref{scalarPCF}) on $Q((0,0),\rho)$ for some small uniform constant $\rho > 0$
such that $\brs{\gU_i}^2(0,0) = \gl_i$.

Now fix some constant $B > 0$, let $\ga_i := B \gl_i$ and let
\begin{align*}
 \til{u}_i =&\ \ga_i^2 \left( u_i(\ga_i^{-1} x, \ga_i^{-2} t) - u_i(0,0)
\right)\\
 \til{\gw}_i =&\ \hat{\gw} \left(\ga_i^{-1} x, \ga_i^{-2} t \right)\\
 \til{h}_i =&\ h \left(\ga_i^{-1} x, \ga_i^{-2} t \right).
 \end{align*}
A direct calculation shows that for each $i$ the function $\til{u}_i$ is a
solution of
\begin{align*}
 \dt \til{u}_i = \log \frac{ \det \til{g}_+^{\til{u}_i} \det \til{h}-}{\det
\til{h}_+
\det \til{g}_+^{\til{u}_i}}.
\end{align*}
For sufficiently large $i$ we may assume this solution exists on $Q((0,0),3)$. 
Moreover, we claim that for every $j \in \mathbb N$ we have
$\nm{\til{u}_i}{C^j(Q((0,0),3))} \leq
C(j,B)$.  By the choice of scaling parameters we have that
$\brs{\til{\gU}}_{\til{g}}^2 \leq C B$.  This corresponds to a uniform $C^3$
estimate for $\til{u}_i$, which after the application of Schauder estimates
implies uniform $C^j$ bounds for all $j$.  Given this, using standard
compactness
theorems we obtain
a subsequence of $(\til{u}_i,\til{\gw}_i,\til{h}_i)$ converging to a limit
$(u_{\infty},\gw_{\infty},h_{\infty})$ on $Q((0,0),2)$.  As the background
metrics $\gw$, $h$ were uniformly controlled before the blowup it follows that
$\gw_{\infty}, h_{\infty}$ are flat, and moreover uniformly equivalent to the
standard flat metric with a bound depending only on the background data $\gw,
h$.  By a rescaling in space and time and adding a function of time only, we can
assume that $u_{\infty}$ is a solution to (\ref{complexparabolic}), which
moreover by construction satisfies
\begin{align*}
 u_{\infty} \in&\ \EE^{k,l}_{\gl,\gL,Q((0,0),2)}, \qquad \brs{\N^3
u_{\infty}}(0,0) = c(\gl,\gL,\gw,h) B^{-1}.
\end{align*}
However, as a solution to (\ref{complexparabolic}), by Theorem \ref{EvansKrylov}
and Schauder estimates there is an a priori interior $C^3$ estimate for
$u_{\infty}$.  For $B$ chosen sufficiently small this is a contradiction,
finishing the proof.
\end{proof}

\section{Appendix: Evolution Equations} \label{calcsec}

In this section we prove the crucial subsolution properties for the matrix $W$
along the real and complex twisted Monge-Amp\`ere equations.  The results are
contained in Lemmas \ref{Wev} and \ref{complexevs}.  We directly prove the case
of complex variables first, which consists of lengthy
calculations and applications of the Cauchy-Schwarz inequality.  Again we note
that these
monotonicity properties are suggested by the discussion of Legendre
transformations in \S \ref{legbck}.  A similar direct calculation can yield the
case of real variables, but we suppress this as it is lengthy and nearly
identical to the complex case.  Instead we show that the real case follows by
formally extending variables and appealing to the complex setting.  

\begin{lemma} \label{complexevs} Let $u_t$ be a solution to
(\ref{complexparabolic}) such that $u_t
\in \mathcal E^{k,l}_U$ for all $t$.  Then
\begin{align} \label{cpmp3}
\left( \dt - \LL \right) \frac{\del u}{\del t} =&\ 0.
\end{align}
Also,
\begin{align*}
\left(\dt - \LL \right)W = Q,
\end{align*}
where
\begin{gather}
\begin{split} \label{cpmp1}
Q_{\ga_z \bga_z} =&\ - u^{\bz_q z_r} u^{\bz_s z_p} u_{z_p
\bz_q \ga_z} u_{z_r \bz_s \bga_z} + u^{\bz_q z_r} u^{\bz_s z_p}
u_{z_p \bz_q \ga_z} u_{z_r \bz_s \bw_k} u^{\bw_k w_l} u_{w_l
\bga_z}\\
&\ - u_{\ga_z \bw_k} u^{\bw_k w_p} u^{\bz_q z_r} u^{\bz_s z_p} u_{z_p \bz_q w_p}
u_{z_r \bz_s \bw_q}
u^{\bw_q w_l} u_{w_l \bga_z} + u_{\ga_z \bw_k} u^{\bw_k w_l} 
u^{\bz_q z_r} u^{\bz_s z_p} u_{z_p \bz_q w_l} u_{z_r \bz_s \bga_z}\\
&\ + u^{\bz_b z_a} \left[ -
u_{\ga_z \bw_k z_a} u^{\bw_k w_p} u_{w_p \bw_q \bz_b} u^{\bw_q w_l} u_{w_l
\bga_z}
+ u_{\ga_z \bw_k z_a} u^{\bw_k w_l} u_{w_l \bga_z \bz_b} \right.\\
&\ - u_{\ga_z \bw_k \bz_b} u^{\bw_k w_p} u_{w_p \bw_q z_a} u^{\bw_q w_l} u_{w_l
\bga_z} + u_{\ga_z \bw_k} u^{\bw_k w_r} u_{w_r \bw_s \bz_b} u^{\bw_s w_p} u_{w_p
\bw_q z_a} u^{\bw_q w_l} u_{w_l \bga_z}\\
&\ + u_{\ga_z \bw_k} u^{\bw_k w_p} u_{w_p \bw_q z_a} u^{\bw_q w_r} u_{w_r
\bw_s \bz_b} u^{\bw_s w_l} u_{w_l \bga_z} - u_{\ga_z \bw_k} u^{\bw_k w_p} u_{w_p
\bw_q z_a} u^{\bw_q w_l} u_{w_l \bga_z
\bz_b}\\
&\ \left. + u_{\ga_z \bw_k \bz_b} u^{\bw_k w_l} u_{w_l \bga_z z_a} - u_{\ga_z
\bw_k}
u^{\bw_k w_r} u_{w_r \bw_s \bz_b} u^{\bw_s w_l} u_{w_l \bga_z z_a} \right]\\
&\ - u^{\bw_b w_a} \left[ - u_{\ga_z \bw_k \bw_b} u^{\bw_k w_p} u_{w_p \bw_q
w_a}
u^{\bw_q w_l} u_{w_l
\bga_z} + u_{\ga_z \bw_k} u^{\bw_k w_r} u_{w_r \bw_s \bw_b} u^{\bw_s w_p} u_{w_p
\bw_q w_a} u^{\bw_q w_l} u_{w_l \bga_z} \right.\\
&\ \left. + u_{\ga_z \bw_k \bw_b} u^{\bw_k w_l} u_{w_l \bga_z w_a} - u_{\ga_z
\bw_k}
u^{\bw_k w_r} u_{w_r \bw_s \bw_b} u^{\bw_s w_l} u_{w_l \bga_z w_a} \right]
\end{split}
\end{gather}
\begin{gather}
\begin{split} \label{cpmp2}
Q_{\ga_w \bga_w} =&\ - u^{\bga_w w_k} u^{\bw_l \ga_w} u^{\bz_q z_r} u^{\bz_s
z_p} u_{z_p \bz_q w_k}
u_{z_r \bz_s \bw_l} + u^{\bz_l z_k} u^{\bga_w w_p} u_{w_p \bw_q \bz_l} u^{\bw_q
w_j} u_{w_j
\bw_k z_k} u^{\bw_k \ga_w}\\
&\ + u^{\bz_l z_k} u^{\bga_w w_j} u_{w_j \bw_k z_k} u^{\bw_k w_p} u_{w_p
\bw_q \bz_l} u^{\bw_q \ga_w}  - u^{\bw_l w_k} u^{\bga_w w_p} u_{w_p \bw_q \bw_l}
u^{\bw_q w_j} u_{w_j
\bw_r w_k} u^{\bw_r \ga_w},
\end{split}
\end{gather}
\begin{gather}
\begin{split} \label{cpmp4}
Q_{\ga_z \ga_w} =&\ - u^{\bz_b z_a} u^{\bz_d z_c} u_{z_a \bz_d \ga_z} u_{z_c
\bz_b \bw_k} u^{\bw_k \ga_w} + u_{\ga_z \bw_k} u^{\bw_k w_l} u^{\bw_p \ga_w} 
u^{\bz_b z_a} u^{\bz_d z_c} u_{z_a \bz_d w_l} u_{z_c \bz_b \bw_p}\\
&\ - u^{\bz_b z_a} \left[  - u_{\ga_z \bw_k z_a} u^{\bw_k w_p} u_{w_p \bw_q
\bz_b} u^{\bw_q \ga_w}  - u_{\ga_z \bw_k \bz_b} u^{\bw_k w_p} u_{w_p \bw_q z_a}
u^{\bw_q \ga_w} \right.\\
&\ \left. + u_{\ga_z \bw_k} u^{\bw_k w_r} u_{w_r \bw_s \bz_b} u^{\bw_s w_p}
u_{w_p \bw_q z_a} u^{\bw_q \ga_w} + u_{\ga_z \bw_k} u^{\bw_k w_p} u_{w_p \bw_q
z_a} u^{\bw_q w_r} u_{w_r \bw_s \bz_b} u^{\bw_s \ga_w} \right]\\
&\ + u^{\bw_b w_a} \left[  - u_{\ga_z \bw_k \bw_b} u^{\bw_k w_p} u_{w_p \bw_q
w_a} u^{\bw_q \ga_w} + u_{\ga_z \bw_k} u^{\bw_k w_r} u_{w_r \bw_s \bw_b}
u^{\bw_s w_p} u_{w_p \bw_q w_a} u^{\bw_q \ga_w}\right],
\end{split}
\end{gather}
\begin{gather}
\begin{split} \label{cpmp5}
Q_{\bga_z \bga_w} =\bar{Q}_{\ga_z \ga_w}
\end{split}
\end{gather}
\begin{proof} First we prove (\ref{cpmp3}).
\begin{align*}
 \dt \left( \frac{\del u}{\del t} \right) =&\ \dt \left( \log \det u_{\ga_z
\bga_z}
- \log
\det (-u_{\ga_w \bga_w}) \right)\\
 =&\ u^{\bz_b z_a} \left( \frac{\del u}{\del t} \right)_{z_a \bz_b} - u^{\bw_b
w_a}
\left( \frac{\del u}{\del t} \right)_{w_a \bw_b}\\
 =&\ \LL \frac{\del u}{\del t}.
\end{align*}
Next we establish (\ref{cpmp2}).  We start by computing partial derivatives
\begin{gather} \label{cpmp210}
\begin{split}
\left(\log \det u_{z \bz} \right)_{,\ga\gb} =&\ \left( u^{\bz_q z_p} u_{z_p
\bz_q \ga} \right)_{,\gb}= u^{\bz_q z_p} u_{z_p \bz_q \ga \gb} - u^{\bz_q z_r}
u^{\bz_s z_p} u_{z_p \bz_q \ga} u_{z_r \bz_s \gb},\\
\left(\log \det (- u_{yy}) \right)_{,\ga\gb} =&\ \left( u^{\bw_q w_p} u_{w_p
\bw_q \ga} \right)_{,\gb}= u^{\bw_q w_p} u_{w_p \bw_q \ga \gb} - u^{\bw_q w_r}
u^{\bw_s w_p} u_{w_p \bw_q \ga} u_{w_r \bw_s \gb},\\
\end{split}
\end{gather}
Using this we compute
\begin{gather} \label{cpmp115}
\begin{split}
\dt u^{\bga_w \ga_w} =&\ - u^{\bga_w w_k} \left( \dt u \right)_{w_k \bw_l}
u^{\bw_l
\ga_w}\\
=&\ - u^{\bga_w w_k} \left( \log \det u_{z \bz} - \log \det (- u_{w \bw})
\right)_{w_k \bw_l} u^{\bw_l \ga_w}\\
=&\ u^{\bga_w w_k} u^{\bw_l \ga_w} \left( u^{\bw_q w_p} u_{w_p \bw_q w_k \bw_l}
-
u^{\bw_q w_r} u^{\bw_s w_p} u_{w_p \bw_q w_k} u_{w_r \bw_s \bw_l}\right.\\
&\ \left. \qquad - u^{\bz_q z_p} u_{z_p \bz_q w_k \bw_l} + u^{\bz_q z_r}
u^{\bz_s z_p} u_{z_p \bz_q w_k} u_{z_r \bz_s \bw_l} \right).
\end{split}
\end{gather}
Also we compute the partial derivatives
\begin{gather} \label{cpmp220}
\begin{split}
u^{\bga_w \ga_w}_{\ga\gb} =&\ - \left( u^{\bga_w w_j} u_{w_j \bw_k \ga} u^{\bw_k
\ga_w} \right)_{,\gb}\\
=&\ u^{\bga_w w_p} u_{w_p \bw_q \gb} u^{\bw_q w_j} u_{w_j \bw_k \ga} u^{\bw_k
\ga_w} - u^{\bga_w w_j} u_{w_j \bw_k \ga \gb} u^{\bw_k \ga_w}\\
&\ + u^{\bga_w w_j} u_{w_j \bw_k \ga} u^{\bw_k w_p} u_{w_p \bw_q \gb} u^{\bw_q
\ga_w}.
\end{split}
\end{gather}
Thus we have
\begin{gather} \label{cpmp230}
\begin{split}
\LL & (u^{\bga_w \ga_w})\\
=&\ u^{\bz_l z_k} (u^{\bga_w \ga_w})_{,z_k \bz_l} - u^{\bw_k w_l} (u^{\bga_w
\ga_w})_{,w_k \bw_l}\\
=&\ u^{\bz_l z_k} \left( u^{\bga_w w_p} u_{w_p \bw_q \bz_l} u^{\bw_q w_j} u_{w_j
\bw_k z_k} u^{\bw_k \ga_w} - \right.\\
&\ \left. \qquad u^{\bga_w w_j} u_{w_j \bw_k z_k \bz_l} u^{\bw_k
\ga_w} +
u^{\bga_w w_j} u_{w_j \bw_k z_k} u^{\bw_k w_p} u_{w_p \bw_q \bz_l} u^{\bw_q
\ga_w}
\right)\\
&\ - u^{\bw_l w_k} \left(u^{\bga_w w_p} u_{w_p \bw_q \bw_l} u^{\bw_q w_j} u_{w_j
\bw_r w_k} u^{\bw_r \ga_w} -\right.\\
&\ \left. \qquad u^{\bga_w w_j} u_{w_j \bw_r w_k \bw_l} u^{\bw_r
\ga_w} +
u^{\bga_w w_j} u_{w_j \bw_r w_k} u^{\bw_r w_p} u_{w_p \bw_q \bw_l} u^{\bw_q
\ga_w}
\right).
\end{split}
\end{gather}
Putting together (\ref{cpmp115}) and (\ref{cpmp230}) yields
\begin{align*}
\left( \dt - \LL \right) & W_{\ga_w \bga_w}\\
=&\ - u^{\bga_w w_k} u^{\bw_l \ga_w} \left( - u^{\bw_q w_r} u^{\bw_s w_p} u_{w_p
\bw_q w_k} u_{w_r \bw_s \bw_l} + u^{\bz_q z_r} u^{\bz_s z_p} u_{z_p \bz_q w_k}
u_{z_r \bz_s \bw_l} \right)\\
&\ + u^{\bz_l z_k} \left( u^{\bga_w w_p} u_{w_p \bw_q \bz_l} u^{\bw_q w_j}
u_{w_j
\bw_k z_k} u^{\bw_k \ga_w} + u^{\bga_w w_j} u_{w_j \bw_k z_k} u^{\bw_k w_p}
u_{w_p
\bw_q \bz_l} u^{\bw_q \ga_w} \right)\\
&\ - u^{\bw_l w_k} \left(u^{\bga_w w_p} u_{w_p \bw_q \bw_l} u^{\bw_q w_j} u_{w_j
\bw_r w_k} u^{\bw_r \ga_w} + u^{\bga_w w_j} u_{w_j \bw_r w_k} u^{\bw_r w_p}
u_{w_p
\bw_q \bw_l} u^{\bw_q \ga_w} \right)\\
=&\ - u^{\bga_w w_k} u^{\bw_l \ga_w} u^{\bz_q z_r} u^{\bz_s z_p} u_{z_p \bz_q
w_k}
u_{z_r \bz_s \bw_l} + u^{\bz_l z_k} u^{\bga_w w_p} u_{w_p \bw_q \bz_l} u^{\bw_q
w_j} u_{w_j
\bw_k z_k} u^{\bw_k \ga_w}\\
&\ + u^{\bz_l z_k} u^{\bga_w w_j} u_{w_j \bw_k z_k} u^{\bw_k w_p} u_{w_p
\bw_q \bz_l} u^{\bw_q \ga_w}  - u^{\bw_l w_k} u^{\bga_w w_p} u_{w_p \bw_q \bw_l}
u^{\bw_q w_j} u_{w_j
\bw_r w_k} u^{\bw_r \ga_w},
\end{align*}
finishing the proof of (\ref{cpmp2}).  Next we establish
(\ref{cpmp1}).  First we compute using (\ref{cpmp210})
\begin{align*}
\dt u_{\ga_z \bga_z} =&\ \left( \log \det u_{\ga_z \bga_z} - \log \det (-
u_{\ga_w
\bga_w}) \right)_{\ga_z \bga_z}\\
=&\ u^{\bz_q z_p} u_{z_p \bz_q \ga_z \bga_z} - u^{\bz_q z_r} u^{\bz_s z_p}
u_{z_p
\bz_q \ga_z} u_{z_r \bz_s \bga_z} - u^{\bw_q w_p} u_{w_p \bw_q \ga_z \bga_z} +
u^{\bw_q w_r} u^{\bw_s w_p} u_{w_p \bw_q \ga_z} u_{w_r \bw_s \bga_z}.
\end{align*}
Also
\begin{align*}
\LL u_{\ga_z \bga_z} =&\ u^{\bz_q z_p} u_{\ga_z \bga_z z_p \bz_q} - u^{\bw_q
w_p}
u_{\ga_z \bga_z w_p \bw_q}.
\end{align*}
Thus
\begin{gather} \label{cpmp105}
\begin{split}
\left( \dt - \LL \right) u_{\ga_z \bga_z} =&\ - u^{\bz_q z_r} u^{\bz_s z_p}
u_{z_p
\bz_q \ga_z} u_{z_r \bz_s \bga_z} + u^{\bw_q w_r} u^{\bw_s w_p} u_{w_p \bw_q
\ga_z}
u_{w_r \bw_s \bga_z}.
\end{split}
\end{gather}
To compute the next term we first differentiate using (\ref{cpmp210})
\begin{gather} \label{cpmp110}
\begin{split}
\dt & \left( u_{\ga_z \bw_k} u^{\bw_k w_l} u_{w_l \bga_z} \right)\\
=&\ \left( \dt u \right)_{\ga_z \bw_k} u^{\bw_k w_l} u_{w_l \bga_z} - u_{\ga_z
\bw_k}
u^{\bw_k w_p} \left( \dt u \right)_{w_p \bw_q} u^{\bw_q w_l} u_{w_l \bga_z} +
u_{\ga_z \bw_k} u^{\bw_k w_l} \left( \dt u_{w_l \bga_z} \right)\\
=&\ \left( u^{\bz_q z_p} u_{z_p \bz_q \ga_z \bw_k} - u^{\bz_q z_r} u^{\bz_s z_p}
u_{z_p \bz_q \ga_z} u_{z_r \bz_s \bw_k} - u^{\bw_b
w_a} u_{w_a \bw_b \ga_z \bw_k} + u^{\bw_d w_c}
u^{\bw_b w_a} u_{w_a \bw_d \ga_z} u_{w_c \bw_b \bw_k} \right) u^{\bw_k w_l}
u_{w_l
\bga_z}\\
&\ - u_{\ga_z \bw_k} u^{\bw_k w_p} \left( u^{\bz_q z_p} u_{z_p \bz_q w_p \bw_q}
-
u^{\bz_q z_r} u^{\bz_s z_p} u_{z_p \bz_q w_p} u_{z_r \bz_s \bw_q} \right.\\
&\ \left. \qquad \qquad \qquad - u^{\bw_b
w_a} u_{w_a \bw_b w_p \bw_q} + u^{\bw_d w_c}
u^{\bw_b w_a} u_{w_a \bw_d w_p} u_{w_c \bw_b \bw_q} \right)
u^{\bw_q w_l} u_{w_l \bga_z}\\
&\ + u_{\ga_z \bw_k} u^{\bw_k w_l} \left( u^{\bz_q z_p} u_{z_p \bz_q w_l \bga_z}
-
u^{\bz_q z_r} u^{\bz_s z_p} u_{z_p \bz_q w_l} u_{z_r \bz_s \bga_z} - u^{\bw_b
w_a} u_{w_a \bw_b w_l \bga_z} + u^{\bw_d w_c}
u^{\bw_b w_a} u_{w_a \bw_d w_l} u_{w_c \bw_b \bga_z} \right).
\end{split}
\end{gather}
Next we compute the partial derivatives
\begin{align*}
\left(u_{\ga_z \bw_k} u^{\bw_k w_l} u_{w_l \bga_z}\right)_{,\ga\gb} =&\ \left(
u_{\ga_z \bw_k \ga} u^{\bw_k w_l} u_{w_l \bga_z} - u_{\ga_z \bw_k} u^{\bw_k w_p}
u_{w_p \bw_q \ga} u^{\bw_q w_l} u_{w_l \bga_z} + u_{\ga_z \bw_k} u^{\bw_k w_l}
u_{w_l \bga_z \ga} \right)_{,\gb}\\
=&\ u_{\ga_z \bw_k \ga \gb} u^{\bw_k w_l} u_{w_l \bga_z} - u_{\ga_z \bw_k \ga}
u^{\bw_k w_p} u_{w_p \bw_q \gb} u^{\bw_q w_l} u_{w_l \bga_z} + u_{\ga_z \bw_k
\ga}
u^{\bw_k w_l} u_{w_l \bga_z \gb}\\
&\ - u_{\ga_z \bw_k \gb} u^{\bw_k w_p} u_{w_p \bw_q \ga} u^{\bw_q w_l} u_{w_l
\bga_z} + u_{\ga_z \bw_k} u^{\bw_k w_r} u_{w_r \bw_s \gb} u^{\bw_s w_p} u_{w_p
\bw_q \ga} u^{\bw_q w_l} u_{w_l \bga_z}\\
&\ - u_{\ga_z \bw_k} u^{\bw_k w_p} u_{w_p \bw_q \ga \gb} u^{\bw_q w_l} u_{w_l
\bga_z} + u_{\ga_z \bw_k} u^{\bw_k w_p} u_{w_p \bw_q \ga} u^{\bw_q w_r} u_{w_r
\bw_s \gb} u^{\bw_s w_l} u_{w_l \bga_z}\\
&\ - u_{\ga_z \bw_k} u^{\bw_k w_p} u_{w_p \bw_q \ga} u^{\bw_q w_l} u_{w_l \bga_z
\gb}\\
&\ + u_{\ga_z \bw_k \gb} u^{\bw_k w_l} u_{w_l \bga_z \ga} - u_{\ga_z \bw_k}
u^{\bw_k
w_r} u_{w_r \bw_s \gb} u^{\bw_s w_l} u_{w_l \bga_z \ga} + u_{\ga_z \bw_k}
u^{\bw_k
w_l} u_{w_l \bga_z \ga \gb}.
\end{align*}
Thus we have
\begin{gather} \label{cpmp120}
\begin{split}
\LL & ( u_{\ga_z \bw_k} u^{\bw_k w_l} u_{w_l \bga_z})\\
=&\ u^{\bz_b z_a} \left(u_{\ga_z \bw_k} u^{\bw_k w_l} u_{w_l
\bga_z}\right)_{,z_a
\bz_b} - u^{\bw_b w_a} \left(u_{\ga_z \bw_k} u^{\bw_k w_l} u_{w_l
\bga_z}\right)_{,w_a \bw_b}\\
=&\ u^{\bz_b z_a} \left[ u_{\ga_z \bw_k z_a \bz_b} u^{\bw_k w_l} u_{w_l \bga_z}
-
u_{\ga_z \bw_k z_a} u^{\bw_k w_p} u_{w_p \bw_q \bz_b} u^{\bw_q w_l} u_{w_l
\bga_z}
+ u_{\ga_z \bw_k z_a} u^{\bw_k w_l} u_{w_l \bga_z \bz_b} \right.\\
&\ - u_{\ga_z \bw_k \bz_b} u^{\bw_k w_p} u_{w_p \bw_q z_a} u^{\bw_q w_l} u_{w_l
\bga_z} + u_{\ga_z \bw_k} u^{\bw_k w_r} u_{w_r \bw_s \bz_b} u^{\bw_s w_p} u_{w_p
\bw_q z_a} u^{\bw_q w_l} u_{w_l \bga_z}\\
&\ - u_{\ga_z \bw_k} u^{\bw_k w_p} u_{w_p \bw_q z_a \bz_b} u^{\bw_q w_l} u_{w_l
\bga_z} + u_{\ga_z \bw_k} u^{\bw_k w_p} u_{w_p \bw_q z_a} u^{\bw_q w_r} u_{w_r
\bw_s \bz_b} u^{\bw_s w_l} u_{w_l \bga_z}\\
&\ - u_{\ga_z \bw_k} u^{\bw_k w_p} u_{w_p \bw_q z_a} u^{\bw_q w_l} u_{w_l \bga_z
\bz_b}\\
&\ \left. + u_{\ga_z \bw_k \bz_b} u^{\bw_k w_l} u_{w_l \bga_z z_a} - u_{\ga_z
\bw_k}
u^{\bw_k w_r} u_{w_r \bw_s \bz_b} u^{\bw_s w_l} u_{w_l \bga_z z_a} + u_{\ga_z
\bw_k} u^{\bw_k w_l} u_{w_l \bga_z z_a \bz_b} \right]\\
&\ - u^{\bw_b w_a} \left[ u_{\ga_z \bw_k w_a \bw_b} u^{\bw_k w_l} u_{w_l \bga_z}
-
u_{\ga_z \bw_k w_a} u^{\bw_k w_p} u_{w_p \bw_q \bw_b} u^{\bw_q w_l} u_{w_l
\bga_z}
+ u_{\ga_z \bw_k w_a} u^{\bw_k w_l} u_{w_l \bga_z \bw_b} \right.\\
&\ - u_{\ga_z \bw_k \bw_b} u^{\bw_k w_p} u_{w_p \bw_q w_a} u^{\bw_q w_l} u_{w_l
\bga_z} + u_{\ga_z \bw_k} u^{\bw_k w_r} u_{w_r \bw_s \bw_b} u^{\bw_s w_p} u_{w_p
\bw_q w_a} u^{\bw_q w_l} u_{w_l \bga_z}\\
&\ - u_{\ga_z \bw_k} u^{\bw_k w_p} u_{w_p \bw_q w_a \bw_b} u^{\bw_q w_l} u_{w_l
\bga_z} + u_{\ga_z \bw_k} u^{\bw_k w_p} u_{w_p \bw_q w_a} u^{\bw_q w_r} u_{w_r
\bw_s \bw_b} u^{\bw_s w_l} u_{w_l \bga_z}\\
&\ - u_{\ga_z \bw_k} u^{\bw_k w_p} u_{w_p \bw_q w_a} u^{\bw_q w_l} u_{w_l \bga_z
\bw_b}\\
&\ \left. + u_{\ga_z \bw_k \bw_b} u^{\bw_k w_l} u_{w_l \bga_z w_a} - u_{\ga_z
\bw_k}
u^{\bw_k w_r} u_{w_r \bw_s \bw_b} u^{\bw_s w_l} u_{w_l \bga_z w_a} + u_{\ga_z
\bw_k} u^{\bw_k w_l} u_{w_l \bga_z w_a \bw_b} \right].
\end{split}
\end{gather}
Putting together (\ref{cpmp105}), (\ref{cpmp110}) and (\ref{cpmp120}) yields
\begin{align*}
 \left( \dt - \LL \right) & W_{\ga_z \bga_z}\\
 =&\ - u^{\bz_q z_r} u^{\bz_s z_p} u_{z_p
\bz_q \ga_z} u_{z_r \bz_s \bga_z} + u^{\bw_q w_r} u^{\bw_s w_p} u_{w_p \bw_q
\ga_z}
u_{w_r \bw_s \bga_z}\\
&\ - \left(  - u^{\bz_q z_r} u^{\bz_s z_p}
u_{z_p \bz_q \ga_z} u_{z_r \bz_s \bw_k} + u^{\bw_d w_c}
u^{\bw_b w_a} u_{w_a \bw_d \ga_z} u_{w_c \bw_b \bw_k} \right) u^{\bw_k w_l}
u_{w_l
\bga_z}\\
&\ + u_{\ga_z \bw_k} u^{\bw_k w_p} \left( - u^{\bz_q z_r} u^{\bz_s z_p} u_{z_p
\bz_q w_p} u_{z_r \bz_s \bw_q} + u^{\bw_d w_c}
u^{\bw_b w_a} u_{w_a \bw_d w_p} u_{w_c \bw_b \bw_q} \right)
u^{\bw_q w_l} u_{w_l \bga_z}\\
&\ - u_{\ga_z \bw_k} u^{\bw_k w_l} \left( -
u^{\bz_q z_r} u^{\bz_s z_p} u_{z_p \bz_q w_l} u_{z_r \bz_s \bga_z} + u^{\bw_d
w_c}
u^{\bw_b w_a} u_{w_a \bw_d w_l} u_{w_c \bw_b \bga_z} \right)\\
&\ + u^{\bz_b z_a} \left[ -
u_{\ga_z \bw_k z_a} u^{\bw_k w_p} u_{w_p \bw_q \bz_b} u^{\bw_q w_l} u_{w_l
\bga_z}
+ u_{\ga_z \bw_k z_a} u^{\bw_k w_l} u_{w_l \bga_z \bz_b} \right.\\
&\ - u_{\ga_z \bw_k \bz_b} u^{\bw_k w_p} u_{w_p \bw_q z_a} u^{\bw_q w_l} u_{w_l
\bga_z} + u_{\ga_z \bw_k} u^{\bw_k w_r} u_{w_r \bw_s \bz_b} u^{\bw_s w_p} u_{w_p
\bw_q z_a} u^{\bw_q w_l} u_{w_l \bga_z}\\
&\ + u_{\ga_z \bw_k} u^{\bw_k w_p} u_{w_p \bw_q z_a} u^{\bw_q w_r} u_{w_r
\bw_s \bz_b} u^{\bw_s w_l} u_{w_l \bga_z} - u_{\ga_z \bw_k} u^{\bw_k w_p} u_{w_p
\bw_q z_a} u^{\bw_q w_l} u_{w_l \bga_z
\bz_b}\\
&\ \left. + u_{\ga_z \bw_k \bz_b} u^{\bw_k w_l} u_{w_l \bga_z z_a} - u_{\ga_z
\bw_k}
u^{\bw_k w_r} u_{w_r \bw_s \bz_b} u^{\bw_s w_l} u_{w_l \bga_z z_a} \right]\\
&\ - u^{\bw_b w_a} \left[ -
u_{\ga_z \bw_k w_a} u^{\bw_k w_p} u_{w_p \bw_q \bw_b} u^{\bw_q w_l} u_{w_l
\bga_z}
+ u_{\ga_z \bw_k w_a} u^{\bw_k w_l} u_{w_l \bga_z \bw_b} \right.\\
&\ - u_{\ga_z \bw_k \bw_b} u^{\bw_k w_p} u_{w_p \bw_q w_a} u^{\bw_q w_l} u_{w_l
\bga_z} + u_{\ga_z \bw_k} u^{\bw_k w_r} u_{w_r \bw_s \bw_b} u^{\bw_s w_p} u_{w_p
\bw_q w_a} u^{\bw_q w_l} u_{w_l \bga_z}\\
&\ + u_{\ga_z \bw_k} u^{\bw_k w_p} u_{w_p \bw_q w_a} u^{\bw_q w_r} u_{w_r
\bw_s \bw_b} u^{\bw_s w_l} u_{w_l \bga_z} - u_{\ga_z \bw_k} u^{\bw_k w_p} u_{w_p
\bw_q w_a} u^{\bw_q w_l} u_{w_l \bga_z
\bw_b}\\
&\ \left. + u_{\ga_z \bw_k \bw_b} u^{\bw_k w_l} u_{w_l \bga_z w_a} - u_{\ga_z
\bw_k}
u^{\bw_k w_r} u_{w_r \bw_s \bw_b} u^{\bw_s w_l} u_{w_l \bga_z w_a} \right]\\
=&:\ \sum_{i=1}^{24} A_i.
\end{align*}
We observe that $A_2 + A_{18} = A_4 + A_{17} = A_6 + A_{21} = A_8 + A_{22} = 0$,
and hence
\begin{align*}
 \left( \dt - \LL \right) & W_{\ga_z \bga_z}\\
 =&\ - u^{\bz_q z_r} u^{\bz_s z_p} u_{z_p
\bz_q \ga_z} u_{z_r \bz_s \bga_z} + u^{\bz_q z_r} u^{\bz_s z_p}
u_{z_p \bz_q \ga_z} u_{z_r \bz_s \bw_k} u^{\bw_k w_l} u_{w_l
\bga_z}\\
&\ - u_{\ga_z \bw_k} u^{\bw_k w_p} u^{\bz_q z_r} u^{\bz_s z_p} u_{z_p \bz_q w_p}
u_{z_r \bz_s \bw_q}
u^{\bw_q w_l} u_{w_l \bga_z} + u_{\ga_z \bw_k} u^{\bw_k w_l} 
u^{\bz_q z_r} u^{\bz_s z_p} u_{z_p \bz_q w_l} u_{z_r \bz_s \bga_z}\\
&\ + u^{\bz_b z_a} \left[ -
u_{\ga_z \bw_k z_a} u^{\bw_k w_p} u_{w_p \bw_q \bz_b} u^{\bw_q w_l} u_{w_l
\bga_z}
+ u_{\ga_z \bw_k z_a} u^{\bw_k w_l} u_{w_l \bga_z \bz_b} \right.\\
&\ - u_{\ga_z \bw_k \bz_b} u^{\bw_k w_p} u_{w_p \bw_q z_a} u^{\bw_q w_l} u_{w_l
\bga_z} + u_{\ga_z \bw_k} u^{\bw_k w_r} u_{w_r \bw_s \bz_b} u^{\bw_s w_p} u_{w_p
\bw_q z_a} u^{\bw_q w_l} u_{w_l \bga_z}\\
&\ + u_{\ga_z \bw_k} u^{\bw_k w_p} u_{w_p \bw_q z_a} u^{\bw_q w_r} u_{w_r
\bw_s \bz_b} u^{\bw_s w_l} u_{w_l \bga_z} - u_{\ga_z \bw_k} u^{\bw_k w_p} u_{w_p
\bw_q z_a} u^{\bw_q w_l} u_{w_l \bga_z
\bz_b}\\
&\ \left. + u_{\ga_z \bw_k \bz_b} u^{\bw_k w_l} u_{w_l \bga_z z_a} - u_{\ga_z
\bw_k}
u^{\bw_k w_r} u_{w_r \bw_s \bz_b} u^{\bw_s w_l} u_{w_l \bga_z z_a} \right]\\
&\ - u^{\bw_b w_a} \left[ - u_{\ga_z \bw_k \bw_b} u^{\bw_k w_p} u_{w_p \bw_q
w_a}
u^{\bw_q w_l} u_{w_l
\bga_z} + u_{\ga_z \bw_k} u^{\bw_k w_r} u_{w_r \bw_s \bw_b} u^{\bw_s w_p} u_{w_p
\bw_q w_a} u^{\bw_q w_l} u_{w_l \bga_z} \right.\\
&\ \left. + u_{\ga_z \bw_k \bw_b} u^{\bw_k w_l} u_{w_l \bga_z w_a} - u_{\ga_z
\bw_k}
u^{\bw_k w_r} u_{w_r \bw_s \bw_b} u^{\bw_s w_l} u_{w_l \bga_z w_a} \right],
\end{align*}
completing the proof of (\ref{cpmp1}).  Next we establish (\ref{cpmp4}).  Using
(\ref{cpmp210}) we compute
\begin{gather} \label{cpmp410}
\begin{split}
\dt & u_{\ga_z \bw_k} u^{\bw_k \ga_w}\\
=&\ \left( \dt u \right)_{\ga_z \bw_k} u^{\bw_k \ga_w} - u_{\ga_z \bw_k}
u^{\bw_k w_l} \left( \dt u \right)_{w_l \bw_p} u^{\bw_p \ga_w}\\
=&\ \left( u^{\bz_b z_a} u_{z_a \bz_b \ga_z \bw_k} - u^{\bw_b w_a} u_{w_a \bw_b
\ga_z \bw_k} \right.\\
&\ \left. - u^{\bz_b z_a} u^{\bz_d z_c} u_{z_a \bz_d \ga_z} u_{z_c \bz_b \bw_k}
+ u^{\bw_b w_a} u^{\bw_d w_c} u_{w_a \bw_d \ga_z} u_{w_c \bw_b \bw_k} \right)
u^{\bw_k \ga_w}\\
&\ - u_{\ga_z \bw_k} u^{\bw_k w_l} u^{\bw_p \ga_w} \left( u^{\bz_b z_a} u_{z_a
\bz_b w_l \bw_p} - u^{\bw_b w_a} u_{w_a \bw_b w_l \bw_p} \right.\\
&\ \left. - u^{\bz_b z_a} u^{\bz_d z_c} u_{z_a \bz_d w_l} u_{z_c \bz_b \bw_p}  +
u^{\bw_b w_a} u^{\bw_d w_c} u_{w_a \bw_d w_l} u_{w_c \bw_b \bw_p} \right).
\end{split}
\end{gather}
Next we compute partial derivatives
\begin{gather} \label{cpmp420}
\begin{split}
\left(u_{\ga_z \bw_k} u^{\bw_k \ga_w} \right)_{,\mu \rho} =&\ \left(u_{\ga_z
\bw_k \mu} u^{\bw_k \ga_w} - u_{\ga_z \bw_k} u^{\bw_k w_p} u_{w_p \bw_q \mu}
u^{\bw_q \ga_w} \right)_{,\rho}\\
=&\ u_{\ga_z \bw_k \mu \rho} u^{\bw_k \ga_w} - u_{\ga_z \bw_k \mu} u^{\bw_k w_p}
u_{w_p \bw_q \rho} u^{\bw_q \ga_w}\\
&\ - u_{\ga_z \bw_k \rho} u^{\bw_k w_p} u_{w_p \bw_q \mu} u^{\bw_q \ga_w} +
u_{\ga_z \bw_k} u^{\bw_k w_r} u_{w_r \bw_s \rho} u^{\bw_s w_p} u_{w_p \bw_q \mu}
u^{\bw_q \ga_w}\\
&\ - u_{\ga_z \bw_k} u^{\bw_k w_p} u_{w_p \bw_q \mu \rho} u^{\bw_q \ga_w} +
u_{\ga_z \bw_k} u^{\bw_k w_p} u_{w_p \bw_q \mu} u^{\bw_q w_r} u_{w_r \bw_s \rho}
u^{\bw_s \ga_w}.
\end{split}
\end{gather}
Using this we compute
\begin{gather} \label{cpmp430}
\begin{split}
\LL & \left( u_{\ga_z \bw_k} u^{\bw_k \ga_w} \right)\\
=&\ u^{\bz_b z_a} \left( u_{\ga_z \bw_k} u^{\bw_k \ga_w} \right)_{z_a \bz_b} -
u^{\bw_b w_a} \left( u_{\ga_z \bw_k} u^{\bw_k \ga_w} \right)_{w_a \bw_b}\\
=&\ u^{\bz_b z_a} \left[ u_{\ga_z \bw_k z_a \bz_b} u^{\bw_k \ga_w} - u_{\ga_z
\bw_k z_a} u^{\bw_k w_p} u_{w_p \bw_q \bz_b} u^{\bw_q \ga_w} \right.\\
&\ - u_{\ga_z \bw_k \bz_b} u^{\bw_k w_p} u_{w_p \bw_q z_a} u^{\bw_q \ga_w} +
u_{\ga_z \bw_k} u^{\bw_k w_r} u_{w_r \bw_s \bz_b} u^{\bw_s w_p} u_{w_p \bw_q
z_a} u^{\bw_q \ga_w}\\
&\ \left. - u_{\ga_z \bw_k} u^{\bw_k w_p} u_{w_p \bw_q z_a \bz_b} u^{\bw_q
\ga_w} + u_{\ga_z \bw_k} u^{\bw_k w_p} u_{w_p \bw_q z_a} u^{\bw_q w_r} u_{w_r
\bw_s \bz_b} u^{\bw_s \ga_w} \right]\\
&\ - u^{\bw_b w_a} \left[ u_{\ga_z \bw_k w_a \bw_b} u^{\bw_k \ga_w} - u_{\ga_z
\bw_k w_a} u^{\bw_k w_p} u_{w_p \bw_q \bw_b} u^{\bw_q \ga_w} \right.\\
&\ - u_{\ga_z \bw_k \bw_b} u^{\bw_k w_p} u_{w_p \bw_q w_a} u^{\bw_q \ga_w} +
u_{\ga_z \bw_k} u^{\bw_k w_r} u_{w_r \bw_s \bw_b} u^{\bw_s w_p} u_{w_p \bw_q
w_a} u^{\bw_q \ga_w}\\
&\ \left. - u_{\ga_z \bw_k} u^{\bw_k w_p} u_{w_p \bw_q w_a \bw_b} u^{\bw_q
\ga_w} + u_{\ga_z \bw_k} u^{\bw_k w_p} u_{w_p \bw_q w_a} u^{\bw_q w_r} u_{w_r
\bw_s \bw_b} u^{\bw_s \ga_w} \right].
\end{split}
\end{gather}
Combining (\ref{cpmp410}) and (\ref{cpmp430}) yields
\begin{align*}
& \left( \dt - \LL \right) W_{\ga_z \ga_w}\\
=&\ \left( - u^{\bz_b z_a} u^{\bz_d z_c} u_{z_a \bz_d \ga_z} u_{z_c \bz_b \bw_k}
+ u^{\bw_b w_a} u^{\bw_d w_c} u_{w_a \bw_d \ga_z} u_{w_c \bw_b \bw_k} \right)
u^{\bw_k \ga_w}\\
&\ - u_{\ga_z \bw_k} u^{\bw_k w_l} u^{\bw_p \ga_w} \left( - u^{\bz_b z_a}
u^{\bz_d z_c} u_{z_a \bz_d w_l} u_{z_c \bz_b \bw_p}  + u^{\bw_b w_a} u^{\bw_d
w_c} u_{w_a \bw_d w_l} u_{w_c \bw_b \bw_p} \right)\\
&\ - u^{\bz_b z_a} \left[  - u_{\ga_z \bw_k z_a} u^{\bw_k w_p} u_{w_p \bw_q
\bz_b} u^{\bw_q \ga_w}  - u_{\ga_z \bw_k \bz_b} u^{\bw_k w_p} u_{w_p \bw_q z_a}
u^{\bw_q \ga_w} \right.\\
&\ \left. + u_{\ga_z \bw_k} u^{\bw_k w_r} u_{w_r \bw_s \bz_b} u^{\bw_s w_p}
u_{w_p \bw_q z_a} u^{\bw_q \ga_w} + u_{\ga_z \bw_k} u^{\bw_k w_p} u_{w_p \bw_q
z_a} u^{\bw_q w_r} u_{w_r \bw_s \bz_b} u^{\bw_s \ga_w} \right]\\
&\ + u^{\bw_b w_a} \left[  - u_{\ga_z \bw_k w_a} u^{\bw_k w_p} u_{w_p \bw_q
\bw_b} u^{\bw_q \ga_w}  - u_{\ga_z \bw_k \bw_b} u^{\bw_k w_p} u_{w_p \bw_q w_a}
u^{\bw_q \ga_w} \right.\\
&\ \left. + u_{\ga_z \bw_k} u^{\bw_k w_r} u_{w_r \bw_s \bw_b} u^{\bw_s w_p}
u_{w_p \bw_q w_a} u^{\bw_q \ga_w} + u_{\ga_z \bw_k} u^{\bw_k w_p} u_{w_p \bw_q
w_a} u^{\bw_q w_r} u_{w_r \bw_s \bw_b} u^{\bw_s \ga_w} \right]\\
=:&\ \sum_{i=1}^{12} A_i.
\end{align*}
Observing that $W$ is Hermitian, and that the operator $\dt - \LL $ is Hermitian
 we obtain  (\ref{cpmp5}). 
\end{proof}
\end{lemma}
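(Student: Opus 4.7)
The first identity is immediate: differentiating the PDE $\partial_t u = \log\det u_{z\bar z} - \log\det(-u_{w\bar w})$ in $t$ and using the standard formula $\partial(\log\det A) = A^{-1}_{ij}\,(\partial A)_{ji}$ gives
\[
\partial_t(\partial_t u) = u^{\bar z_b z_a}(\partial_t u)_{z_a\bar z_b} - u^{\bar w_b w_a}(\partial_t u)_{w_a\bar w_b} = \mathcal{L}(\partial_t u),
\]
so this step requires no effort beyond observing that the operator $\mathcal{L}$ defined in (\ref{cLdef}) is exactly the linearization of the right-hand side.

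The real work is computing $(\partial_t-\mathcal{L})W$ block by block, and the plan is to handle the simplest block first to warm up, then tackle the more complicated blocks in the same spirit. The two workhorses throughout are (a) the derivative of an inverse, $\partial(A^{-1})_{ij} = -A^{-1}_{ip}(\partial A)_{pq} A^{-1}_{qj}$, applied twice when taking $\mathcal{L}$, and (b) the time-derivative identity above. I would start with the lower-right block $W_{\alpha_w\bar\alpha_w} = -u^{\bar\alpha_w\alpha_w}$. For the time derivative I use the inverse rule together with the PDE, writing $\partial_t(-u^{\bar\alpha_w\alpha_w}) = u^{\bar\alpha_w w_k}(\partial_t u)_{w_k\bar w_l}u^{\bar w_l\alpha_w}$ and expanding $(\partial_t u)_{w_k \bar w_l}$ by differentiating (\ref{cpmp210}). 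For the spatial piece I compute the partials $u^{\bar\alpha_w\alpha_w}{}_{,\alpha\beta}$ directly, which produces one term involving $u_{w\bar w\alpha\beta}$ and two quadratic terms in first derivatives of $u_{w\bar w}$. The cancellation is then visible: the second-derivative terms arising from $\partial_t$ precisely match those from $\mathcal{L}$ (this is where the equation is used), leaving only the quartic expressions that assemble into $Q_{\alpha_w\bar\alpha_w}$ of (\ref{cpmp2}).

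Next I would attack the upper-left block $W_{\alpha_z\bar\alpha_z}$, which splits as $u_{\alpha_z\bar\alpha_z}$ plus the triple product $-u_{\alpha_z\bar w_k}u^{\bar w_k w_l}u_{w_l\bar\alpha_z}$. The first piece is standard: applying $\partial_t-\mathcal{L}$ and using (\ref{cpmp210}) kills the pure fourth-derivative terms and leaves the two Schwarz-type squares in (\ref{cpmp105}). The second piece requires the product rule for three factors, which is tedious but mechanical; the inner $u^{\bar w_k w_l}$ contributes through the inverse formula. After collecting, one is faced with a long list of roughly two dozen terms of the form $A_1+\cdots+A_{24}$; the crucial observation, exactly as in the paper's bookkeeping, is that pairs coming from applying $\mathcal{L}$ to the inverse cancel against pairs coming from $\partial_t$ of the inverse, specifically the pairings $A_2+A_{18}, A_4+A_{17}, A_6+A_{21}, A_8+A_{22}=0$. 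What survives is the stated formula (\ref{cpmp1}).

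The off-diagonal block $W_{\alpha_z\alpha_w} = u_{\alpha_z\bar w_k}u^{\bar w_k\alpha_w}$ is computed in the same way: product rule for two factors, inverse derivative formula, combine with the PDE-driven time derivative, and identify the four second-derivative cancellations. The Hermiticity statement (\ref{cpmp5}) is then free, since $W$ is Hermitian and the operator $\partial_t-\mathcal{L}$ has real-coefficient symbol.

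The main obstacle is not conceptual but notational: with four complex index families $(z,\bar z,w,\bar w)$ and up to five nested inverse matrices per term, it is easy to miscopy a term or miss a cancellation. To keep control, I would adopt the structural principle that every fourth-order derivative of $u$ must cancel (since $W$ is second-order in $u$ and the equation is a single scalar relation), and use this as a running consistency check as the many $A_i$ are listed. Once all cancellations are executed, the remaining terms are automatically quartic in second derivatives, which is precisely the form of $Q$.
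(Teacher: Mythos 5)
Your proposal is correct and follows essentially the same route as the paper: prove the first identity by noting $\mathcal{L}$ is the linearization of the right-hand side, then compute $(\partial_t-\mathcal{L})W$ block by block using the inverse-derivative formula and the second derivatives of $\log\det$, with the fourth-order terms cancelling between the $\partial_t$ and $\mathcal{L}$ contributions and the surviving quartic terms (after the bookkeeping cancellations you identify, e.g.\ $A_2+A_{18}=0$, etc.) assembling into $Q$, and with the last block obtained from Hermiticity. This matches the paper's proof in both structure and detail.
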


\begin{lemma} \label{complexQsign} With the setup above,
\begin{align*}
Q \leq 0.
\end{align*}
\begin{proof} Using Lemma \ref{complexevs} we compute
\begin{align*}
Q(\ga,\bga) =&\ Q_{\ga_z \bga_z} + Q_{\ga_z \bga_w} + Q_{\ga_w \bga_z} +
Q_{\ga_w \bga_w}\\
=&\ - u^{\bz_q z_r} u^{\bz_s z_p} u_{z_p
\bz_q \ga_z} u_{z_r \bz_s \bga_z} + u^{\bz_q z_r} u^{\bz_s z_p}
u_{z_p \bz_q \ga_z} u_{z_r \bz_s \bw_k} u^{\bw_k w_l} u_{w_l
\bga_z}\\
&\ - u_{\ga_z \bw_k} u^{\bw_k w_p} u^{\bz_q z_r} u^{\bz_s z_p} u_{z_p \bz_q w_p}
u_{z_r \bz_s \bw_q}
u^{\bw_q w_l} u_{w_l \bga_z} + u_{\ga_z \bw_k} u^{\bw_k w_l} 
u^{\bz_q z_r} u^{\bz_s z_p} u_{z_p \bz_q w_l} u_{z_r \bz_s \bga_z}\\
&\ + u^{\bz_b z_a} \left[ -
u_{\ga_z \bw_k z_a} u^{\bw_k w_p} u_{w_p \bw_q \bz_b} u^{\bw_q w_l} u_{w_l
\bga_z}
+ u_{\ga_z \bw_k z_a} u^{\bw_k w_l} u_{w_l \bga_z \bz_b} \right.\\
&\ - u_{\ga_z \bw_k \bz_b} u^{\bw_k w_p} u_{w_p \bw_q z_a} u^{\bw_q w_l} u_{w_l
\bga_z} + u_{\ga_z \bw_k} u^{\bw_k w_r} u_{w_r \bw_s \bz_b} u^{\bw_s w_p} u_{w_p
\bw_q z_a} u^{\bw_q w_l} u_{w_l \bga_z}\\
&\ + u_{\ga_z \bw_k} u^{\bw_k w_p} u_{w_p \bw_q z_a} u^{\bw_q w_r} u_{w_r
\bw_s \bz_b} u^{\bw_s w_l} u_{w_l \bga_z} - u_{\ga_z \bw_k} u^{\bw_k w_p} u_{w_p
\bw_q z_a} u^{\bw_q w_l} u_{w_l \bga_z
\bz_b}\\
&\ \left. + u_{\ga_z \bw_k \bz_b} u^{\bw_k w_l} u_{w_l \bga_z z_a} - u_{\ga_z
\bw_k}
u^{\bw_k w_r} u_{w_r \bw_s \bz_b} u^{\bw_s w_l} u_{w_l \bga_z z_a}
\right]\\
&\ - u^{\bw_b w_a} \left[ - u_{\ga_z \bw_k \bw_b} u^{\bw_k w_p} u_{w_p \bw_q
w_a}
u^{\bw_q w_l} u_{w_l
\bga_z} + u_{\ga_z \bw_k} u^{\bw_k w_r} u_{w_r \bw_s \bw_b} u^{\bw_s w_p} u_{w_p
\bw_q w_a} u^{\bw_q w_l} u_{w_l \bga_z} \right.\\
&\ \left. + u_{\ga_z \bw_k \bw_b} u^{\bw_k w_l} u_{w_l \bga_z w_a} - u_{\ga_z
\bw_k}
u^{\bw_k w_r} u_{w_r \bw_s \bw_b} u^{\bw_s w_l} u_{w_l \bga_z w_a} \right]\\
&\ - u^{\bga_w w_k} u^{\bw_l \ga_w} u^{\bz_q z_r} u^{\bz_s z_p} u_{z_p \bz_q
w_k}
u_{z_r \bz_s \bw_l} + u^{\bz_l z_k} u^{\bga_w w_p} u_{w_p \bw_q \bz_l} u^{\bw_q
w_j} u_{w_j
\bw_k z_k} u^{\bw_k \ga_w}\\
&\ + u^{\bz_l z_k} u^{\bga_w w_j} u_{w_j \bw_k z_k} u^{\bw_k w_p} u_{w_p
\bw_q \bz_l} u^{\bw_q \ga_w}  - u^{\bw_l w_k} u^{\bga_w w_p} u_{w_p \bw_q \bw_l}
u^{\bw_q w_j} u_{w_j
\bw_r w_k} u^{\bw_r \ga_w}\\
&\ - u^{\bz_b z_a} u^{\bz_d z_c} u_{z_a \bz_d \ga_z} u_{z_c \bz_b \bw_k}
u^{\bw_k \ga_w} + u_{\ga_z \bw_k} u^{\bw_k w_l} u^{\bw_p \ga_w}  u^{\bz_b z_a}
u^{\bz_d z_c} u_{z_a \bz_d w_l} u_{z_c \bz_b \bw_p}\\
&\ - u^{\bz_b z_a} \left[  - u_{\ga_z \bw_k z_a} u^{\bw_k w_p} u_{w_p \bw_q
\bz_b} u^{\bw_q \ga_w}  - u_{\ga_z \bw_k \bz_b} u^{\bw_k w_p} u_{w_p \bw_q z_a}
u^{\bw_q \ga_w} \right.\\
&\ \left. + u_{\ga_z \bw_k} u^{\bw_k w_r} u_{w_r \bw_s \bz_b} u^{\bw_s w_p}
u_{w_p \bw_q z_a} u^{\bw_q \ga_w} + u_{\ga_z \bw_k} u^{\bw_k w_p} u_{w_p \bw_q
z_a} u^{\bw_q w_r} u_{w_r \bw_s \bz_b} u^{\bw_s \ga_w} \right]\\
&\ + u^{\bw_b w_a} \left[  - u_{\ga_z \bw_k \bw_b} u^{\bw_k w_p} u_{w_p \bw_q
w_a} u^{\bw_q \ga_w} + u_{\ga_z \bw_k} u^{\bw_k w_r} u_{w_r \bw_s \bw_b}
u^{\bw_s w_p} u_{w_p \bw_q w_a} u^{\bw_q \ga_w}\right]\\
&\ + u^{\bga_w w_p} u^{\bw_q w_k} u_{w_k \bga_z}  u^{\bz_b z_a} u^{\bz_d z_c}
u_{z_a \bz_d w_p} u_{z_c \bz_b \bw_q}  - u^{\bga_w w_k} u^{\bz_b z_a} u^{\bz_d
z_c} u_{z_a \bz_d w_k} u_{z_c \bz_b \bga_z}\\
&\ - u^{\bz_b z_a} \left[ u^{\bga_w w_r} u_{w_r \bw_s \bz_b} u^{\bw_s w_p}
u_{w_p \bw_q z_a} u^{\bw_q w_k} u_{w_k \bga_z}  + u^{\bga_w w_p} u_{w_p \bw_q
z_a} u^{\bw_q w_r} u_{w_r \bw_s \bz_b} u^{\bw_s w_k} u_{w_k \bga_z} \right.\\
&\ \left. - u^{\bga_w w_p} u_{w_p \bw_q z_a} u^{\bw_q w_k} u_{w_k \bga_z \bz_b}
- u^{\bga_w w_p} u_{w_p \bw_q \bz_b} u^{\bw_q w_k} u_{w_k \bga_z z_a}\right]\\
&\ + u^{\bw_b w_a} \left[ u^{\bga_w w_r} u_{w_r \bw_s \bw_b} u^{\bw_s w_p}
u_{w_p \bw_q w_a} u^{\bw_q w_k} u_{w_k \bga_z} - u^{\bga_w w_p} u_{w_p \bw_q
\bw_b} u^{\bw_q w_k} u_{w_k \bga_z w_a} \right]\\
=:&\ \sum_{i=1}^{36} A_i.
\end{align*}
We observe:
\begin{align*}
 A_{21} + A_{22}+A_{29} + A_{30} =& \Re \left[ \ u^{\bz_b z_a} u^{\bz_d z_c}
\left( u_{z_c \bz_b \bw_p}
u^{\bw_p \ga_w} \right) \left(u_{z_a \bz_d w_l} u^{\bw_k w_l} u_{\ga_z \bw_k} -
u_{\ga_z z_a \bz_d} \right) \right]\\
 \leq&\ -  \left(  A_1 + A_2 + A_3 + A_{4} + A_{17}  \right).
\end{align*} using Cauchy-Schwarz.  
Next
\begin{align*}
 A_{23} + A_{26} + A_{32}+ A_{33}=&\Re \left[ \ u^{\bz_b z_a} u^{\bw_k w_p}
\left( u_{w_p \bw_q \bz_b}
u^{\bw_q \ga_w} \right) \left( u_{\ga_z z_a \bw_k} - u_{z_a w_p \bw_k} u^{\bw_q
w_p} u_{\ga_z \bw_q} \right) \right]\\
\leq&\ - \left( A_5 + A_6 + A_9 + A_{10} + A_{18} \right).
\end{align*}
Next
\begin{align*}
 A_{24} + A_{25} + A_{31}+ A_{34} =& \Re \left[ \ u^{\bz_b z_a} u^{\bw_k w_p}
\left( u_{w_p \bw_q z_a}
u^{\bw_q \ga_w} \right) \left( u_{\ga_z \bw_k \bz_b} - u_{\ga_z \bw_s} u^{\bw_s
w_r} u_{\bz_b w_r \bw_k} \right) \right]\\
\leq&\ - \left( A_7 + A_8 + A_{11} + A_{12} + A_{19} \right).
\end{align*}
Next
\begin{align*}
A_{27} + A_{28} + A_{35}+ A_{36} =& \Re \left[ \ u^{\bw_b w_a} u^{\bw_k w_p}
\left( u_{w_p \bw_q w_a}
u^{\bw_q \ga_w} \right) \left( u_{\ga_z \bw_s} u^{\bw_s w_r} u_{w_r \bw_k \bw_b}
- u_{\ga_z \bw_k\bw_b} \right) \right]\\
 \leq&\ -  \left(  A_{13}+A_{14}+A_{15}+A_{16} + A_{20} \right).
\end{align*}

\end{proof}
\end{lemma}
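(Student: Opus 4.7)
The plan is to expand the Hermitian form $Q(\ga, \bga) = Q_{\ga_z \bga_z} + Q_{\ga_z \bga_w} + Q_{\ga_w \bga_z} + Q_{\ga_w \bga_w}$ by substituting the explicit formulas (\ref{cpmp1})--(\ref{cpmp5}) from Lemma \ref{complexevs}, label the resulting summands $A_1, \ldots, A_N$ in the order they appear, and then verify nonpositivity by a sequence of Cauchy--Schwarz estimates after a suitable regrouping.

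Scanning the expanded sum, I would first isolate the manifestly nonpositive terms: those of the form $-u^{\bz z} u^{\bz z} \lvert u_{z \bz\, \cdot}\rvert^2$ or $-u^{\bw w} u^{\bw w} \lvert u_{w \bw\, \cdot}\rvert^2$, where the dots stand for contractions that produce Hermitian squares in $\ga$. The remaining terms are Hermitian cross terms with no fixed sign. The strategy is to partition these cross terms into a small number of quadruples, each of which assembles into an expression of the form
\[
2 \Real\!\left[\, T \cdot \bar{S}\, \right],
\]
where $T$ and $S$ are contractions of third derivatives of $u$ against inverse complex Hessians, and both $\lvert T \rvert^2$ and $\lvert S \rvert^2$ (measured with respect to the appropriate inverse Hessian) are already present among the manifestly negative squares. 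A weighted Cauchy--Schwarz then absorbs each quadruple into the corresponding squares, with each negative square used at most once.

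The guiding principle, and the main obstacle, comes from the formal Legendre-transformation picture of \S \ref{legbck}--\ref{realevol}: if the complex Legendre transform $w$ of $u$ were defined, then $W$ would literally be $\del\delb w$ in the transformed coordinates, and the subsolution property $(\dt - \LL) W \leq 0$ would reduce to the standard maximum-principle identity for the complex Monge--Amp\`ere operator, with each of the four blocks of $W$ in (\ref{cW}) contributing one ``square'' term. This suggests that exactly four Cauchy--Schwarz groupings should suffice, one per block. The hard part is purely combinatorial: with so many terms, indexed by combinations of $z,w,\bz,\bw$ derivatives and inverse Hessians $u^{\bz z}, u^{\bw w}$, one must identify precisely which $A_i$ pair up to form each $2\Real$ expression, then check that the resulting squares exhaust the available negative terms without double counting. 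Once the partition is guessed, each Cauchy--Schwarz application is routine algebra.
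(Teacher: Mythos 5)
Your strategy is the paper's strategy: expand $Q(\ga,\bga)$ into its $36$ summands, complete squares, group the remaining cross terms into four quadruples, and absorb each by Cauchy--Schwarz. But the proposal stops exactly where the proof begins. The entire content of this lemma is the explicit identification of the four groupings and the verification that each is dominated by a disjoint collection of the negative terms; you explicitly defer this (``once the partition is guessed, each Cauchy--Schwarz application is routine algebra''), so nothing has actually been proved. The paper exhibits the partition concretely: the off-diagonal quadruples are $A_{21}+A_{22}+A_{29}+A_{30}$, $A_{23}+A_{26}+A_{32}+A_{33}$, $A_{24}+A_{25}+A_{31}+A_{34}$, and $A_{27}+A_{28}+A_{35}+A_{36}$, each rewritten as $\Real\left[\IP{V, T-S}\right]$ for explicit third-derivative contractions and bounded respectively by $-(A_1+\cdots+A_4+A_{17})$, $-(A_5+A_6+A_9+A_{10}+A_{18})$, $-(A_7+A_8+A_{11}+A_{12}+A_{19})$, $-(A_{13}+\cdots+A_{16}+A_{20})$.

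Two points in your plan are also inaccurate as stated. First, it is not the case that the quadruples are absorbed into ``manifestly negative squares'': the block $Q_{\ga_z\bga_z}$ itself contains sign-indefinite cross terms (e.g.\ $A_2$, $A_4$ and the bracketed terms $A_6$, $A_8$, etc.), and these must first be combined with the genuine squares $A_1$, $A_3$, \dots\ to form completed squares of the type $-\brs{T-S}^2$; only then does the Cauchy--Schwarz $2\Real\IP{V,T-S}\leq \brs{V}^2+\brs{T-S}^2$ close up. A grouping that treats all indefinite terms symmetrically as ``cross terms to be absorbed'' will not balance the books. Second, the heuristic ``one grouping per block of $W$'' does not match the actual partition: the four groupings are indexed by the four distinct third-derivative structures ($u_{z\bz\bw}$ paired against $u_{\ga_z z\bz}$, the two conjugate pairings of $u_{w\bw\bz}$, and $u_{w\bw\bw}$), and each grouping draws its absorbing squares from \emph{across} the blocks ($Q_{\ga_z\bga_z}$ contributes four terms and $Q_{\ga_w\bga_w}$ one term to each). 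Without carrying out this bookkeeping there is no argument that the negative terms suffice and are not double-counted.
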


\begin{lemma} \label{Wev} Let $u_t$ be a solution to (\ref{realparabolic}) such
that $u_t \in
\mathcal E$ for all $t$.  Then
\begin{gather*}
\begin{split} 
\left( \dt - \LL \right) \frac{\del u}{\del t}=&\ 0.
\end{split}
\end{gather*}
Also,
\begin{align*}
\left( \dt - \LL \right) W \leq 0.
\end{align*}
\begin{proof} Let $u_t$ be as in the statement.  Define $v_t : \mathbb C^n \to
\mathbb R$, by
\begin{align*}
v_t(z_1,\dots,z_n) = u_t(\Real z_1,\dots, \Real z_n).
\end{align*}
Elementary calculations show that $v_t \in \EE$ and that $v_t$ is a solution to
(\ref{complexparabolic}).  Moreover the matrix $W$ associated to $v_t$ via
(\ref{cW}) agrees with the matrix $\N^2 w$ as in (\ref{twistedhessian}).  The
result follows from Lemmas \ref{complexevs} and \ref{complexQsign}.
\end{proof}
\end{lemma}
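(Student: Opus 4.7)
The plan is to avoid redoing the long calculations of Lemmas \ref{complexevs} and \ref{complexQsign} for real variables by a direct embedding trick: every real solution of (\ref{realparabolic}) lifts canonically to a complex solution of (\ref{complexparabolic}) that is invariant under translation in the imaginary directions, and the matrix $W$ of (\ref{twistedhessian}) matches (up to constant factors) the complex matrix (\ref{cW}) of the lift.

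First, I would dispose of the evolution of $\partial_t u$ directly. Differentiating $H(u)=0$ in $t$ produces
\begin{align*}
\partial_t (\partial_t u) = u^{x_j x_i}(\partial_t u)_{x_i x_j} - u^{y_j y_i}(\partial_t u)_{y_i y_j} = \mathcal{L}(\partial_t u),
\end{align*}
since $\partial_t$ commutes with second space derivatives and the logarithmic derivatives of determinants give precisely the real analogue of the operator $\mathcal{L}$ one would assign (namely $u^{x_j x_i}\partial_{x_i}\partial_{x_j} - u^{y_j y_i}\partial_{y_i}\partial_{y_j}$).

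For the subsolution property of $W$, I would set, for $u_t \in \EE^{k,l,\mathbb{R}}$ on $U\subset\mathbb{R}^k\times\mathbb{R}^l$, the lift
\begin{align*}
v_t(z_1,\dots,z_k,w_1,\dots,w_l) := u_t(\Real z_1,\dots,\Real z_k,\Real w_1,\dots,\Real w_l),
\end{align*}
defined on a suitable open set of $\mathbb{C}^k\times\mathbb{C}^l$. A direct computation using $\partial_{z_a}\partial_{\bz_b} = \tfrac{1}{4}(\partial_{x_a}\partial_{x_b} + \partial_{y_a}\partial_{y_b})$ on functions of $\Real z$ alone (and the analogous statement in the $w$ variables) shows $v_t \in \EE^{k,l,\mathbb{C}}$ whenever $u_t \in \EE^{k,l,\mathbb{R}}$, and that (\ref{complexelliptic}) for $v$ reduces to (\ref{realelliptic}) for $u$, so $v_t$ solves (\ref{complexparabolic}). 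Moreover the complex matrix $W$ of (\ref{cW}) formed from $v_t$ coincides, block by block, with (a constant times) the real matrix of (\ref{twistedhessian}) formed from $u_t$; in particular $(\partial_t - \mathcal{L}_{\mathbb{C}})W_v$ evaluated on these imaginary-translation-invariant functions equals $(\partial_t - \mathcal{L}_{\mathbb{R}})W_u$.

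The main obstacle is essentially bookkeeping: one has to check carefully that each index contraction in (\ref{cW}) collapses correctly when the second $w$-derivatives in the imaginary direction vanish, and that the resulting expression agrees with (\ref{twistedhessian}). Once this is verified, the conclusion $(\partial_t - \mathcal{L})W \leq 0$ is immediate from Lemmas \ref{complexevs} and \ref{complexQsign} applied to $v_t$, and there is nothing further to prove.
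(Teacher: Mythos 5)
Your proposal is correct and is essentially the paper's own argument: the authors likewise define $v_t(z,w)=u_t(\Real z,\Real w)$, observe that $v_t\in\EE^{k,l,\mathbb C}$ solves (\ref{complexparabolic}) with the complex matrix (\ref{cW}) reducing to (\ref{twistedhessian}), and then invoke Lemmas \ref{complexevs} and \ref{complexQsign}. The only detail worth recording (which the paper also glosses over) is the factor of $\tfrac14$ from $\partial_{z}\partial_{\bz}$, which shifts $F_{\mathbb C}(v)$ by the constant $(l-k)\log 4$ and is absorbed by adding a term linear in $t$ to $v$, leaving $W$ and $\LL$ unchanged.
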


\bibliographystyle{hamsplain}

\end{document}